\newtheorem{theorem}{Theorem}
\newtheorem{definition}[theorem]{Definition}
\newtheorem{lemma}[theorem]{Lemma}
\newtheorem{assumption}[theorem]{Assumption}
\newcommand*{\N}{\ensuremath{\mathbb{N}}}
\newcommand*{\Z}{\ensuremath{\mathbb{Z}}}
\newcommand*{\R}{\ensuremath{\mathbb{R}}}
\newcommand*{\C}{\ensuremath{\mathbb{C}}}
\renewcommand{\i}{\mathrm{i}}
\renewcommand{\phi}{\varphi}
\renewcommand{\rho}{{\varrho}}
\renewcommand{\epsilon}{{\varepsilon}}
\renewcommand{\d}[1]{\,\mathrm{d}#1 \,}
\newcommand{\J}{\mathcal{J}} 
\newcommand{\0}{{0}} 
\newcommand{\B}{{\mathcal{B}}}
\renewcommand{\j}{{\bm j}}
\renewcommand{\B}{{\mathfrak{B}}}
\newcommand{\p}{{per}}
\renewcommand{\L}{\mathcal{L}} 
\renewcommand{\Re}{\mathrm{Re}\,}
\renewcommand{\Im}{\mathrm{Im}\,}
\newlength{\dhatheight}
\definecolor{xl}{rgb}{0.8,0.2,0.3}
\begin{document}
	
	\sloppy
	
\title{Exponential convergence of perfectly matched layers for scattering problems with periodic surfaces}
\author{
Ruming Zhang\thanks{Institute of Applied and Numerical mathematics, Karlsruhe Institute of Technology, Karlsruhe, Germany
; \texttt{ruming.zhang@kit.edu}. }}
\date{}
\maketitle
	
\begin{abstract}
 The main task in this paper is to prove that the perfectly matched layers (PML) method converges exponentially with respect to the PML parameter for scattering problems with periodic surfaces.  In \cite{Chand2009}, a linear convergence is proved for the PML method for scattering problems with rough surfaces. At the end of that paper, three important questions were asked, and the third question is if exponential convergence holds locally. In our paper, we answer this question for a special case, when the rough surface is actually periodic. The result can also be easily extended to locally perturbed periodic surfaces or layers. Due to technical reasons, we have to exclude all the wavenumbers which are half integers. The main idea of the proof is to apply the Floquet-Bloch transform to rewrite the problem as an equivalent family of quasi-periodic problems, and then study the analytic extension of the quasi-periodic problems with respect to the Floquet-Bloch parameters. Then the Cauchy integral formula is applied to avoid linear convergent points. Finally the exponential convergence is proved from the inverse Floquet-Bloch transform. Numerical results are also presented at the end of this paper. \\

 \noindent
 {\bf Keywords: PML method, scattering problems, periodic surfaces, exponential convergence, Cauchy integral theorem}

\end{abstract}

\section{Introduction}

The PML method has been widely applied to the simulation of wave propagations in unbounded domains since it was invented in \cite{Beren1994}. The main idea for the PML method is to add an artificial absorbing layer outside the scatterers, where there is almost no reflection. The problem is then approximated by the truncation with a proper boundary condition. To guarantee that the PML method works, it is of essential importance to study  the convergence of the solution with respect to the PML parameters.

This paper studies the convergence of the PML method for acoustic scattering problems with periodic surfaces in two dimensional spaces. This paper is motivated by the open questions at the end of \cite{Chand2009}. Although a linear convergence has been proved for rough surfaces in that paper, exponential convergence was shown for an extreme case when the surface is flat. The authors conjectured that the exponential convergence held locally also for nonflat surfaces. In this paper, we try to answer this question for periodic surfaces, using techniques introduced in \cite{Chen2003}. First, we introduce the setting of this problem as well as some important notations and spaces.

Suppose $\Gamma$ is a surface defined by a $2\pi$-periodic Lipschitz continuous function $\zeta$, and $\Omega$ is the unbounded periodic domain above $\Gamma$:
\[
\Gamma:=\big\{(x_1,\zeta(x_1)):\,x_1\in\R\big\};\quad \Omega:=\big\{(x_1,x_2):\,x_2>\zeta(x_1):\,x_1\in\R\big\}.
\]
For simplicity, we only consider the problem described by the following model:
\begin{equation}
 \label{eq:waveguide}
 \Delta u+k^2 u=f\text{ in }\Omega;\quad u=0\text{ on }\Gamma,
\end{equation}
where $f\in L^2(\Omega)$ is a compactly supported source term.  

Let $H$ be a   number  such that  $$H>\sup_{x_1\in\R}\zeta(x_1)\quad\text{ and } \quad H>\sup_{x\in {\rm supp}(f)}x_2.$$ Let $\Gamma_H:=\R\times\{H\}$ be a straight line lying above $\Gamma$ and let the periodic strip between $\Gamma$ and $\Gamma_H$ be denoted by $\Omega_H$. Then ${\rm supp}(f)\subset \Omega_H$. Thus $u$ satisfies the homogeneous Helmholtz equation  when $x_2>H$.

To guarantee that the solution $u$ propagates upwards, we also require that $u$ satisfies the following radiation condition (see \cite{Chand2005}):
\[
 u(x_1,x_2)=\int_\R \widehat{u}(\xi,H)e^{\i \xi x_1+\i\sqrt{k^2-\xi^2}(x_2-H)}\d \xi,\quad x_2\geq H,
\]
where $\widehat{u}(\xi,H)$ is the Fourier transform of $u(x_1,H)$ and $\sqrt{k^2-\xi^2}$ has non-negative real and imaginary parts. This radiation condition defines the following DtN map on $\Gamma_H$:
\[
 (T^+\phi)(x_1)=\i\int_\R\sqrt{k^2-\xi^2}\,\widehat{\phi}(\xi) e^{\i \xi x_1}\d\xi,\quad\text{ where }\phi(x_1)=\int_\R\widehat{\phi}(\xi) e^{\i \xi x_1}\d\xi.
\]
From \cite{Chand2005}, $T^+$ is a bounded operator from $H^{1/2}(\Gamma_H)$ to $H^{-1/2}(\Gamma_H)$. Thus $u$  satisfies the following boundary condition:
\begin{equation}
 \label{eq:bc}
 \frac{\partial u}{\partial x_2}=T^+u\quad\text{ in }\quad H^{-1/2}(\Gamma_H).
\end{equation}
Now the problem is formulated in the periodic domain $\Omega_H$ with finite height by \eqref{eq:waveguide}-\eqref{eq:bc}. The weak formulation is straight forward, i.e., to find $u\in \widetilde{H}^1(\Omega_H)$ such that
\begin{equation}
 \label{eq:var}
 \int_{\Omega_H}\left[\nabla u\cdot\nabla\overline{\phi}-k^2  u \overline{\phi}\right]\d x-\int_{\Gamma_H}\left[T^+ u\right]\overline{\phi}\d s=-\int_{\Omega}f\overline{\phi}d x
\end{equation}
holds for any compactly supported $\phi\in \widetilde{H}^1(\Omega_H)$, where
\[
 \widetilde{H}^1(\Omega_H):=\left\{\psi\in H^1(\Omega_H):\,\psi\big|_{\Gamma}=0\right\}.
\]
From \cite{Chand2005} it is known that the problem \eqref{eq:var} is uniquely solvable in $\widetilde{H}^1(\Omega_H)$. For unique solvability in the weighted Sobolev space $\widetilde{H}^1_r(\Omega_H)$ ($|r|<1$)  we refer to \cite{Chand2010}.

We apply the Floquet-Bloch transform to \eqref{eq:var}, and the problem is written as a family of quasi-periodic problems, and the original solution is then written as the inverse Floquet-Bloch transform of quasi-periodic problems, which is an integral on an interval with respect to the Floquet-Bloch parameters. The quasi-periodic problems depend piecewise smoothly on the Floquet-Bloch parameters, with only one or two square root singularities (later called ``cutoff values''). At the cutoff values, only linear convergence are proved for the PML method. For parameters away from those points, exponential convergence is proved. For details we refer to \cite{Chen2003,Chand2009,Kirsc2021}. To the best of the author's knowledge, the exponential convergence for rough surfaces (including periodic surfaces) is only proved for complex valued wavenumbers, for details we refer to \cite{Li2011}.

To deal with these points, we  extend the quasi-periodic problems analytically with respect to the Floquet-Bloch parameters. With the help of the Cauchy integral formula, the inverse Floquet-Bloch transform equals to an integral on a modified contour. We design the  contour carefully such that it has a positive distance to the cutoff values. From technical reasons, we have to assume that the wavenumber is not a half integer. Then we prove the uniform exponential convergence for parameters lying on the contour, which finally results in the exponential convergence for the PML method.

At the end of this paper, several numerical examples are presented to show that the PML method actually converges exponentially. From these results, exponential convergence is shown for wavenumbers including half integers. The convergence rate is also far better than expected. This leads to a possible further topic, which is to extend  the method to half integers and also to prove the sharper estimates.

The rest of this paper is organized as follows. In the second section, we apply the Floquet-Bloch transform to the problem. In Section 3, the transformed problems are extended analytically and then the inverse Floquet-Bloch transform is modified from the Cauchy integral formula. The exponential convergence is proven then in Section 4. In Section 5, numerical examples are presented. Some further discussions and comments are shown in the last section.

\section{The Floquet-Bloch Transform}

In this section, we apply the Floquet-Bloch transform to the problem \eqref{eq:waveguide}-\eqref{eq:bc}, or equivalently, \eqref{eq:var}. For simplicity, we first define the domains restricted to one periodicity cell. 
Let $\Gamma_j$, $\Omega_j$ and $\Omega_H^j$ be the restriction of $\Gamma$, $\Omega$ and $\Omega_H$ to one periodicity cell $\big[2\pi j-\pi,2\pi j+\pi\big]\times\R$. Without loss of generality, assume that ${\rm supp}(f)\subset\Omega_H^0$.



Recall the definition of the Floquet-Bloch transform $\J$ for compactly supported smooth function $\phi$:
\[
( \J\phi)(\alpha,x):=\sum_{j\in\Z}\phi\left(x+\left(\begin{matrix}
                                                     2\pi j\\0
                                                    \end{matrix}
\right)\right)e^{-\i\alpha(x_1+2\pi j)},\quad x\in\Omega_H^0;\,\alpha\in [-1/2,1/2].
\]
Here $\alpha$ is called the Floquet-Bloch parameter throughout this paper. 
It has been proved  (see Theorem 4.2, \cite{Lechl2015e}) that $\J$ is an isomorphism between $H^s(\Omega_H)$ and $L^2\left([-1/2,1/2];H^s_\p(\Omega_H^0)\right)$. Note that the space $H^s_\p(\Omega_H^0)\subset H^s(\Omega_H^0)$ contains all functions that are $2\pi$-periodic in $x_1$-direction, and the space $L^2\left([-1/2,1/2];H^s_\p(\Omega_H^0)\right)$ is equipped with the norm:
\[
 \|\psi\|^2_{L^2\left([-1/2,1/2];H^s_\p(\Omega_H^0)\right)}=\int_{-1/2}^{1/2}\left\|\psi(\alpha,\cdot)\right\|^2_{H^s_\p(\Omega_H^0)}\d\alpha.
\]
The subspace $L^2\left([-1/2,1/2];\widetilde{H}^s_\p(\Omega_H^0)\right)$ contains all the functions $\phi\in L^2\left([-1/2,1/2];H^s_\p(\Omega_H^0)\right)$ such that $\phi(\alpha,\cdot)\big|_{\Gamma_0}=0$. 
Moreover, the inverse Floquet-Bloch transform coincides with the adjoint operator of $\J$, i.e.,
\[
 \left(\J^{-1}\psi\right)(x)=\int_{-1/2}^{1/2}\psi(\alpha,x)e^{\i\alpha x_1}\d\alpha,\quad x\in\Omega_H.
\]

Given any compactly supported $f\in L^2(\Omega_0)$, the problem \eqref{eq:var} has a unique solution $u\in \widetilde{H}^1(\Omega_H)$ (see \cite{Chand2005}). 
Let $w:=\J u$ then $w\in L^2\left([-1/2,1/2];\widetilde{H}^1_\p(\Omega_H^0)\right)$. In \cite{Lechl2016}, we also proved that $w$ depends continuously on $\alpha\in[-1/2,1/2]$. For all $\alpha\in[-1/2,1/2]$, $w(\alpha,\cdot)$ is $2\pi$-periodic  with respect to $x_1$. Moreover, $w(\alpha,\cdot)$ is the strong solution of:
\begin{eqnarray}
 \label{eq:per1}
 &&\Delta w(\alpha,\cdot)+2\i \alpha\frac{\partial w(\alpha,\cdot)}{\partial x_1}+(k^2 n-\alpha^2) w(\alpha,\cdot)=e^{-\i\alpha x_1}f\text{ in }\Omega_H^0;\\
 \label{eq:per2}
 && w(\alpha,\cdot)=0\text{ on }\Gamma_0;\\
 \label{eq:per3}
 && \frac{\partial w(\alpha,\cdot)}{\partial x_2}={T}^+_\alpha w(\alpha,\cdot)\text{ on }\Gamma_H^0.
\end{eqnarray}
Note that here $T^+_\alpha$ is the $\alpha$-dependent periodic DtN map given by:
\[
 (T^+_\alpha\phi)(x_1)=\i\sum_{j\in\Z}\sqrt{k^2-(\alpha+j)^2} \widehat{\phi}_j e^{\i j x_1}\quad\text{ where } \quad\phi(x_1)=\sum_{j\in\Z}\widehat{\phi}_j e^{\i j x_1}
\]
where
\[
 \sqrt{k^2-(\alpha+j)^2}=\begin{cases}
        \sqrt{k^2-(\alpha+j)^2},\quad\text{ if }|\alpha+j|\leq k;\\
        \i\sqrt{(\alpha+j)^2-k^2},\quad \text{ if }|\alpha+j|>k.
                         \end{cases}
\]
The operator $T^+_\alpha$ is bounded from $H^{1/2}_\p(\Gamma_H^0)$ to $H^{-1/2}_\p(\Gamma_H^0)$.

It is already known that the problem \eqref{eq:per1}-\eqref{eq:per3} is uniquely solvable in $\widetilde{H}^1_\p(\Omega_H^0)$ for given $f\in L^2(\Omega_H^0)$. We refer to \cite{Kirsc1993,Bruck2003} for details. With these solutions, we get the original solution from the inverse Floquet-Bloch transform, i.e.,
\begin{equation}\label{eq:inverse}
 u(x)=\int_{-1/2}^{1/2} e^{\i\alpha x_1 }w(\alpha,x)\d\alpha,\quad x\in\Omega_H.
\end{equation}

\section{Analytic extension}

In this section, we recall the analytic extension introduced in \cite{Arens2021}. First we give the weak formulation for the $\alpha$-dependent periodic problem, i.e., to find $\widetilde{H}^1_\p\left(\Omega_H^0\right)$ such that
\begin{equation}
 \label{eq:var_per}
 \begin{aligned}
 \int_{\Omega_0}\left[\nabla w(\alpha,\cdot)\cdot\nabla\overline{\phi}-2\i\alpha\frac{\partial w(\alpha,\cdot)}{\partial x_1}\overline{\phi}-(k^2 n-\alpha^2)w(\alpha,\cdot)\overline{\phi}\right]&\d x\\
 -2\pi\i\sum_{j\in\Z}\sqrt{k^2-(\alpha+j)^2}\widehat{w}(\alpha,j)\overline{\widehat{\phi}(j)}&=-\int_{\Omega_0}e^{-\i\alpha x_1}f(x)\overline{\phi(x)}\d x,
 \end{aligned}
\end{equation}
where $\widehat{w}(\alpha,j)$ and $\widehat{\phi}(j)$ are the $j$-th Fourier coefficients of $w(\alpha,\cdot)\big|_{\Gamma_H^0}$ and $\phi\big|_{\Gamma_H^0}$, respectively.

Define the following operators by the Riesz representation theorem:
\begin{eqnarray*}
 && \left<A_1 \psi,\phi\right>= \int_{\Omega_0}\left[\nabla \psi\cdot\nabla\overline{\phi}-k^2 n \psi\overline{\phi}\right]\d x;\\
 &&\left<A_2 \psi,\phi\right>=-2\i\int_{\Omega_0}\frac{\partial\psi}{\partial x_1}\overline{\phi}\d x;\\
 &&\left<A_3 \psi,\phi\right>=\int_{\Omega_0}\psi\overline{\phi}\d x;\\
 && \left<B_j \psi,\phi\right>=-2\pi\i\widehat{\psi}(j)\overline{\widehat{\phi}(j)}.
\end{eqnarray*}
Note that here $\left<\cdot,\cdot\right>$ is the inner product of the space $\widetilde{H}^1_\p(\Omega_H^0)$. 
Then all the operators are bounded in $\widetilde{H}^1_\p(\Omega_H^0)$ and independent of $\alpha$. 
There is also a family of elements $F(\alpha,\cdot)\in \widetilde{H}^1_\p(\Omega_H^0)$ such that
\[
 \left<F(\alpha,\cdot),\phi\right>=-\int_{\Omega_0}e^{-\i\alpha x_1}f(x)\overline{\phi(x)}\d x.
\]
Since $e^{-\i\alpha x_1}f(x)$ depends analytically on $\alpha$, also  $F$ depends analytically on $\alpha$. Then \eqref{eq:var_per} is written as the following $\alpha$-dependent equations:
\begin{equation}
 \label{eq:oper_per}
 \left(A_1+\alpha A_2+\alpha^2 A_3+\sum_{j\in\Z}\sqrt{k^2-(\alpha+j)^2}B_j\right)w(\alpha,\cdot)=F(\alpha,\cdot).
\end{equation}
For simplicity set 
\[
 D(\alpha):=A_1+\alpha A_2+\alpha^2 A_3+\sum_{j\in\Z}\sqrt{k^2-(\alpha+j)^2}B_j.
\]

We know that $D(\alpha)$ is invertible for all $\alpha\in[-1/2,1/2]$ and the solution $w(\alpha,\cdot)$ has square root singularities at the $\alpha\in[-1/2,1/2]$ when $|\alpha+j|=k$ for some $j\in\Z$ (for details see \cite{Kirsc1993}). Since $A_1,A_2,A_3,B_j$ are independent of $\alpha$, the singularities only come from the coefficients in front of $B_j$. Since the singular points are particularly important, we give the following definition.

\begin{definition}
 \label{def} 
 Any point $\alpha\in[-1/2,1/2]$ such that $|\alpha+j|=k$ for some $j\in\Z$ is called a ``cutoff value``.
\end{definition}

First note that if $k$ is a half integer, for one cutoff value $\alpha\in[-1/2,1/2]$, there are two integers $j_1\neq j_2$ such that $|\alpha+j_1|=|\alpha+j_2|=k$. This case is more complicated and will not be treated in this paper. Thus we  make the following assumption. 

\begin{assumption}\label{asp1}
 Assume that $k\neq\frac{n}{2}$ for all positive integer $n$. 
\end{assumption}

With Assumption \ref{asp1}, $k>0$ can be written as $\kappa+\j$ ($\j\in\N$), where $\kappa\in(-1/2,1/2)\setminus\{0\}$ is called the ``rounding error'' of $k$. Note that the decomposition of the positive number $k$ is unique. From this decomposition, there are two cutoff values, i.e., $-\kappa$ and $\kappa$.

Consider the analytic extension of the solution with respect to $\alpha$ to a small neighbourhood of $[-1/2,1/2]\subset\C$. First we begin with the coefficients of $B_j$. Define: 
\[
 G^+(\alpha,j)=\sqrt{k+\alpha+j},\quad G^-(\alpha,j)=\sqrt{k-\alpha-j}.
\]
Note that the relationship $\sqrt{xy}=\sqrt{x}\sqrt{y}$ does not hold for arbitrary $x$ and $y$. We here use $\sqrt{k^2-(\alpha+j)^2}=G^+(\alpha,j)G^-(\alpha,j)$ since at least one of $k+\alpha+j$ and $k-\alpha-j$ is non-negative. Note that in the following analytic extension, we should also be very careful such that the values of $G^\pm(\alpha,j)$ are not changed for $\alpha\in[-1/2,1/2]$, to guarantee the relationship always holds.

\begin{definition}
 \label{def:sqrt}
 In this paper, the square root ``$\sqrt{\,\,\,\,\,\,}$'' is defined in the branch cutting along the negative imaginary axis. 
\end{definition}


We find all the zeros of $G^\pm(\alpha,j)$ for $(\alpha,j)\in\left[(-1/2,1/2)\setminus\{0\}\right]\times\Z$:
\[
 G^+(-\kappa,-\j)=G^-(\kappa,\j)=0.
\]
Now we focus on the analytic extension of $G^+(\alpha,j)G^-(\alpha,j)$ near the zeros $(-\kappa,-\j)$ and $(\kappa,\j)$. Note that
  \[
 G^+(\alpha,-\j)=\sqrt{\kappa+\alpha},\quad G^-(\alpha,-\j)=\sqrt{\kappa+2\j-\alpha}.
\]
and
  \[
 G^+(\alpha,\j)=\sqrt{\kappa+2\j+\alpha},\quad G^-(\alpha,\j)=\sqrt{\kappa-\alpha}.
\]
Note that when $\j=0$,
\[
 G^+(\alpha,-\j)=G^+(\alpha,\j)=\sqrt{\kappa+\alpha},\quad  G^-(\alpha,-\j)= G^-(\alpha,\j)=\sqrt{\kappa-\alpha}.
\]

The discussion is carried out for the following different situations. Define the rays $Z_\pm\subset\C$ by $Z_-:=-\kappa+\i\R_{\leq 0}$ and $Z_+:=\kappa+\i\R_{\geq 0}$. Let $\delta\in(0,|\kappa|)$.

\begin{itemize}
 \item Let $\alpha$ be in a neighourhood of $-\kappa$. 
 \begin{itemize}
  \item From Definition \ref{def:sqrt}, $G^+(\alpha,-\j)$ is analytic  in $[(-\kappa-\delta,-\kappa+\delta)+\i\R]\setminus Z_-$  and $G^-(\alpha,-\j)$ is analytic in $(-\kappa-\delta,-\kappa+\delta)+\i\R$.  
  Therefore, $G^+(\alpha,-\j)G^-(\alpha,-\j)$ is   analytic in  $[(-\kappa-\delta,-\kappa+\delta)+\i\R]\setminus Z_-$.
 \item When $\j\neq 0$ both $G^+(\alpha,\j)$ and $G^-(\alpha,\j)$ are analytic in   $(-\kappa-\delta,-\kappa+\delta)+\i\R$. The case $\j=0$ is treated as in the previous item. 
 \end{itemize}
\item Let $\alpha$ be in a neighourhood of $\kappa$.  
 \begin{itemize}
  \item  From Definition \ref{def:sqrt}, $G^+(\alpha,\j)$  is analytic in $(\kappa-\delta,\kappa+\delta)+\i\R$ and $G^-(\alpha,\j)$ is analytic in $\left[(\kappa-\delta,\kappa+\delta)+\i\R\right]\setminus Z_+$. 
  Therefore $G^+(\alpha,\j)G^-(\alpha,\j)$ is  analytic in $\left[(\kappa-\delta,\kappa+\delta)+\i\R\right]\setminus Z_+$.
  \item When $\j\neq 0$ both $G^+(\alpha,-\j)$ and $G^-(\alpha,-\j)$ are analytic in  $(\kappa-\delta,\kappa+\delta)+\i\R$. The case $\j=0$ is treated as in the previous item.  
 \end{itemize}
\end{itemize}

From above arguments, when $k$ satisfies Assumption \ref{asp1} the operator $D(\alpha)$ is extended analytically to $\left[(-1/2-\epsilon,1/2+\epsilon)+\i\R\right]\setminus \left(Z_-\cup Z_+\right)$. Note that a sufficiently small $\epsilon>0$ can be chosen since $\pm1/2\neq\kappa$. For a visualization of the branch cuts we refer to (a), Figure \ref{fig:branch_cuts}. 

\begin{figure}[tttttt!!!b]
\centering
\begin{tabular}{cc}
\includegraphics[width=0.4\textwidth]{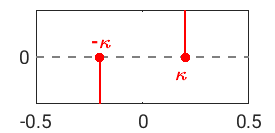} & 
\includegraphics[width=0.4\textwidth]{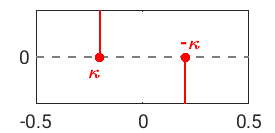} \\
(a) & (b) 
\end{tabular}
\caption{Branch cuts for different settings: (a) $\kappa>0$; (b) $\kappa<0$.}
\label{fig:branch_cuts}
\end{figure}

Before the discussion we introduce some notations. Denote the open disk with center $z_0$ and radius $\delta$  by $\mathfrak{B}(z_0,\delta)$. Moreover, the upper and lower half disks are defined by:
\[
 \mathfrak{B}_+(z_0,\delta):=\left\{z\in \mathfrak{B}(z_0,\delta):\,\Im(z)> 0\right\},\quad  \B_-(z_0,\delta):=\left\{z\in \B(z_0,\delta):\,\Im(z)< 0\right\}.
\]

To consider the analytic extension of $w(\alpha,\cdot)=D^{-1}(\alpha)F(\alpha,\cdot)$ with respect to $\alpha$, we need to separate the operator $D(\alpha)$ by an analytic part and a singular part. First we consider the extension near the point $-\kappa$. Define
\[
 D_+(\alpha):=A_1+\alpha A_2+\alpha^2 A_3+\sum_{j\neq-\j}\sqrt{k^2-(\alpha+j)^2}B_j,
\]
then
\[
  D(\alpha)= D_+(\alpha)+\sqrt{\kappa+\alpha}\,B_+(\alpha)
\]
where 
\[
B_+(\alpha)=G^-(\alpha,-\j)B_{-\j}=\sqrt{\kappa+2\j-\alpha}\,B_{-\j}.
\]
From above formulas, both $D_+$ and $B_+$ depend analytically on $\alpha\in\B(-\kappa,\delta)$ when $\delta>0$ is sufficiently small. Since $D_+$ is a small perturbation of the invertible operator $D(\alpha)$, it is also invertible and depends analytically on $\alpha\in \B(-\kappa,\delta)$ for a small $\delta>0$. From Neumann series, 
\[
 D^{-1}(\alpha)=D_+^{-1}(\alpha)\left[\sum_{n=0}^\infty (-\sqrt{\kappa+\alpha})^n (B_+(\alpha) D_+^{-1}(\alpha))^n\right], \text{ when }0\leq \delta<<1.
\]
Define
\begin{eqnarray*}
 && D_+^1(\alpha)=\sum_{n=0}^\infty (\kappa+\alpha)^n\,D_+^{-1}(\alpha)\left(B_+D_+^{-1}(\alpha)\right)^{2n};\\
 && D_+^2(\alpha)=-\sum_{n=0}^\infty (\kappa+\alpha)^n\,D_+^{-1}(\alpha)\left(B_+D_+^{-1}(\alpha)\right)^{2n+1},
\end{eqnarray*}
then
\[
 D^{-1}(\alpha)=D_+^1(\alpha)+\sqrt{\kappa+\alpha}\,D_+^2(\alpha).
\]
Here both $D_+^1$ and $D_+^2$ depend analytically on $\alpha\in\B(-\kappa,\delta)$ for a small $\delta>0$. Then  the solution has the following decomposition:
\[
 w(\alpha,\cdot)=D^{-1}(\alpha)F(\alpha,\cdot)=w^1_+(\alpha,\cdot)+\sqrt{\kappa+\alpha}\,w^2_+(\alpha,\cdot),
\]
where $w^1_+(\alpha,\cdot)=D^1_+(\alpha)F(\alpha,\cdot)$ and $w^2_+(\alpha,\cdot)=D^2_+(\alpha)F(\alpha,\cdot)$ both depend analytically on $\alpha\in\B(-\kappa,\delta)$ for a small $\delta>0$. Thus $w(\alpha,\cdot)$ depends analytically on $\alpha\in\B(-\kappa,\delta)\setminus Z_-$.

Similarly, 
in $\B(\kappa,\delta)$ for a small $\delta>0$, $w$ has the decomposition:
\[
 w(\alpha,\cdot)=D^{-1}(\alpha)F(\alpha,\cdot)=w^1_-(\alpha,\cdot)+\sqrt{\kappa-\alpha}\,w^2_-(\alpha,\cdot),
\]
where $w^1_-(\alpha,\cdot)$ and $w^2_-(\alpha,\cdot)$ both depend analytically on $\alpha$. We conclude the results in the following theorem. 

\begin{theorem}
 \label{th:ana_ext}
 Let $k$ satisfy Assumption \ref{asp1} and $k=\kappa+\j$ for some $j\in\N$ and $\kappa\in(-1/2,1/2)\setminus\{0\}$. For fixed $\alpha\in[-1/2,1/2]$,  $w(\alpha,\cdot)\in \widetilde{H}^1_\p(\Omega_H^0)$ is the unique weak solution of \eqref{eq:var_per}. Then $w(\alpha,\cdot)$ is extended analytically to $\B(-\kappa,\delta)\setminus Z_-$   and $\B(\kappa,\delta)\setminus Z_+$. Note that here $0<\delta<|\kappa|\leq k$ is sufficiently small.
 
\end{theorem}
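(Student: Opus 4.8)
The plan is to make rigorous the decomposition sketch that precedes the theorem, namely to split the operator $D(\alpha)$ near each cutoff value into an analytic part plus a scalar square-root factor times an analytic operator, and then invert via a Neumann series. I would carry this out in full near $-\kappa$ and indicate that the $\kappa$ case is symmetric.

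\textbf{Step 1: Identify the singular term and isolate it.} Fix the cutoff value $-\kappa$, which by the analysis above is the unique zero of $\alpha\mapsto G^+(\alpha,-\j)$ in a small disk, and observe that all other factors $\sqrt{k^2-(\alpha+j)^2}$, $j\neq-\j$, as well as $\alpha$, $\alpha^2$, are analytic in $\B(-\kappa,\delta)$ for $\delta>0$ small enough (this uses Definition~\ref{def:sqrt} and the itemized discussion; in particular the branch cuts $Z_\pm$ stay away from $\B(-\kappa,\delta)$ except for $Z_-$ emanating from $-\kappa$). Hence write $D(\alpha)=D_+(\alpha)+\sqrt{\kappa+\alpha}\,B_+(\alpha)$ with $D_+$ and $B_+$ operator-valued functions analytic on all of $\B(-\kappa,\delta)$ (not merely on the slit disk), since the slit sits only inside the scalar prefactor $\sqrt{\kappa+\alpha}$.

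\textbf{Step 2: Invertibility of the analytic part $D_+(\alpha)$.} On $[-1/2,1/2]$ we know $D(\alpha)$ is invertible (cited from \cite{Kirsc1993}); at $\alpha=-\kappa$ the singular term vanishes, so $D_+(-\kappa)=D(-\kappa)$ is invertible. Invertibility is an open condition and $D_+$ is analytic, so after possibly shrinking $\delta$, $D_+(\alpha)$ is invertible with uniformly bounded inverse on $\overline{\B(-\kappa,\delta)}$, and $D_+^{-1}$ is analytic there. Then for $\delta$ small enough $\|\sqrt{\kappa+\alpha}\,B_+(\alpha)D_+^{-1}(\alpha)\|<1$ uniformly on $\B(-\kappa,\delta)$, so the Neumann series converges in operator norm and gives the displayed formula for $D^{-1}(\alpha)$. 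Grouping even and odd powers of $\sqrt{\kappa+\alpha}$ (the even powers producing integer powers of $(\kappa+\alpha)$, hence honest analytic functions) yields $D^{-1}(\alpha)=D_+^1(\alpha)+\sqrt{\kappa+\alpha}\,D_+^2(\alpha)$ with $D_+^1,D_+^2$ analytic on $\B(-\kappa,\delta)$; the even/odd regrouping is justified because the series converges absolutely in operator norm on a disk where $|\sqrt{\kappa+\alpha}|$ is bounded.

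\textbf{Step 3: Transfer to the solution and close up.} Since $F(\alpha,\cdot)$ is analytic in $\alpha$ (noted above, because $e^{-\i\alpha x_1}f$ is), the functions $w_+^1=D_+^1 F$ and $w_+^2=D_+^2 F$ are analytic $\widetilde H^1_\p(\Omega_H^0)$-valued functions on $\B(-\kappa,\delta)$, so $w(\alpha,\cdot)=w_+^1(\alpha,\cdot)+\sqrt{\kappa+\alpha}\,w_+^2(\alpha,\cdot)$ is analytic on $\B(-\kappa,\delta)\setminus Z_-$ and agrees with the weak solution of \eqref{eq:var_per} on the real interval by uniqueness of analytic continuation together with uniqueness of the weak solution for real $\alpha$. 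Repeating the argument near $\kappa$ with $D_-(\alpha):=A_1+\alpha A_2+\alpha^2A_3+\sum_{j\neq\j}\sqrt{k^2-(\alpha+j)^2}B_j$ and singular factor $\sqrt{\kappa-\alpha}$ (whose branch cut is $Z_+$) gives analyticity on $\B(\kappa,\delta)\setminus Z_+$, completing the proof. The main obstacle is the bookkeeping in Step~1--2: one must check carefully that, for $\delta$ small, the branch cuts of every $G^\pm(\alpha,j)$ other than the one responsible for the local singularity genuinely avoid the disk (this is exactly what Assumption~\ref{asp1} buys us, since a half-integer $k$ would make a second factor vanish in the same disk), and that the Neumann-series radius can be made uniform; both are routine once the geometry of the cuts in Figure~\ref{fig:branch_cuts} is taken seriously.
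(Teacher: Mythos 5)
Your proposal is correct and follows essentially the same route as the paper's own argument: isolating the singular factor $\sqrt{\kappa+\alpha}$ (resp.\ $\sqrt{\kappa-\alpha}$), inverting the analytic part $D_+$ near the cutoff value, expanding $D^{-1}(\alpha)$ in a Neumann series, regrouping even and odd powers to obtain $D^{-1}=D_+^1+\sqrt{\kappa+\alpha}\,D_+^2$ with $D_+^1,D_+^2$ analytic, and transferring the decomposition to $w=D^{-1}F$ via the analyticity of $F$. Your justification of the invertibility of $D_+$ (evaluating at $\alpha=-\kappa$ where it coincides with the invertible $D(-\kappa)$ and using openness of invertibility) is a slightly cleaner phrasing of the same perturbation argument used in the paper, so no further comparison is needed.
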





In the next step, we will modify the integral in the inverse Floquet-Bloch transform \eqref{eq:inverse} near the cutoff values $-\kappa$ and $\kappa$ with the results in Theorem \ref{th:ana_ext}. First consider the case that $k=\kappa+\j$ where $\kappa\in(0,1/2)$ and $\j\in\N$. For simplicity, 
the discussion begins with a scalar valued function.

\begin{lemma}
 \label{lm:contour} 
 Let the square roots be defined in Definition \ref{def:sqrt}.
 \begin{itemize}
  \item Suppose $g(\alpha)$ is an analytic function defined in a small neighourhood of the half disk $\overline{\B_+(-\kappa,\delta)}$. Define the half circle
\[
 \mathfrak{C}_+:=\Big\{|\alpha+\kappa|=\delta:\,\Im(\alpha)\geq 0\Big\}
\]
with a  clockwise direction. Then  the following equation holds:
 \[
  \int_{\mathfrak{C}_+}\sqrt{\kappa+\alpha}\,g(\alpha)\d\alpha=\int_{-\delta-\kappa}^{\delta-\kappa}\sqrt{\kappa+\alpha}\,g(\alpha)\d\alpha.
 \]
 \item   Suppose $g$ is analytic in a small neighourhood of the half disk $\overline{\B_-(\kappa,\delta)}$. 
 Let \[
 \mathfrak{C}_-:=\left\{|\alpha-\kappa|=\delta:\,\Im(\alpha)\leq 0\right\}
\]
be the half circle with a counter clockwise direction. Then  the following equation holds:
 \[
  \int_{\mathfrak{C}_-}\sqrt{\kappa-\alpha}\,g(\alpha)\d\alpha=\int_{-\delta+\kappa}^{\delta+\kappa}\sqrt{\kappa-\alpha}\,g(\alpha)\d\alpha.
 \]
 \end{itemize}

\end{lemma}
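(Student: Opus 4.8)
The plan is to apply Cauchy's integral theorem on the closed half-disk boundary, using that the integrand $\sqrt{\kappa+\alpha}\,g(\alpha)$ is holomorphic on $\B_+(-\kappa,\delta)$ and continuous up to the closure. For the first item, I would consider the closed contour consisting of the half-circle $\mathfrak{C}_+$ (traversed clockwise, i.e.\ from $-\kappa+\delta$ down through the upper half-plane to $-\kappa-\delta$) together with the real segment from $-\kappa-\delta$ back to $-\kappa+\delta$. This closed contour bounds the upper half-disk $\B_+(-\kappa,\delta)$, and traversing it in the indicated order gives a clockwise (negatively oriented) boundary. The key point is that, by Definition~\ref{def:sqrt}, the branch cut of $\sqrt{\,\cdot\,}$ runs along the negative imaginary axis, so $\sqrt{\kappa+\alpha}$ is analytic precisely on $\B(-\kappa,\delta)\setminus Z_-$; since $Z_-=-\kappa+\i\R_{\le 0}$ stays in the \emph{lower} half-plane, the function $\sqrt{\kappa+\alpha}$ is in fact analytic on the open upper half-disk $\B_+(-\kappa,\delta)$ and continuous up to the real segment (the only boundary point where the cut is touched is $\alpha=-\kappa$ itself, which is an integrable endpoint of the segment, not an interior singularity).

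The main steps are then: (i) verify holomorphy of $\sqrt{\kappa+\alpha}\,g(\alpha)$ on the open half-disk and continuity on its closure, invoking the hypothesis that $g$ is analytic on a neighbourhood of $\overline{\B_+(-\kappa,\delta)}$ and the branch-cut analysis above; (ii) apply Cauchy's theorem for a domain whose boundary is piecewise $C^1$ (here a half-circle plus a segment), concluding that the integral over the full positively oriented boundary vanishes; (iii) split that boundary integral into the contribution of $\mathfrak{C}_+$ and the contribution of the segment, and rearrange. Writing $\partial\B_+(-\kappa,\delta)$ with positive (counter-clockwise) orientation, it decomposes as the segment from $-\kappa-\delta$ to $-\kappa+\delta$ followed by the half-circle traversed counter-clockwise, i.e.\ followed by $-\mathfrak{C}_+$ (since $\mathfrak{C}_+$ was declared clockwise). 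Hence $0=\int_{-\delta-\kappa}^{\delta-\kappa}\sqrt{\kappa+\alpha}\,g\,\d\alpha-\int_{\mathfrak{C}_+}\sqrt{\kappa+\alpha}\,g\,\d\alpha$, which is exactly the claimed identity.

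The second item is entirely analogous, with the roles of the half-planes reversed: now $\sqrt{\kappa-\alpha}$ has its cut where $\kappa-\alpha\in\i\R_{\le 0}$, i.e.\ along $Z_+=\kappa+\i\R_{\ge 0}$, which lies in the \emph{upper} half-plane, so $\sqrt{\kappa-\alpha}\,g(\alpha)$ is analytic on the open lower half-disk $\B_-(\kappa,\delta)$ and continuous up to the real segment. The boundary $\partial\B_-(\kappa,\delta)$, positively oriented, is the half-circle $\mathfrak{C}_-$ (declared counter-clockwise, which for a lower half-disk is indeed the positive orientation) followed by the real segment from $\kappa-\delta$ to $\kappa+\delta$; Cauchy's theorem then gives $\int_{\mathfrak{C}_-}\sqrt{\kappa-\alpha}\,g\,\d\alpha+\int_{-\delta+\kappa}^{\delta+\kappa}\sqrt{\kappa-\alpha}\,g\,\d\alpha=0$? — here I need to be careful about a sign, since the stated conclusion has a plus rather than a minus. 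I would resolve this by writing the segment in the orientation that actually appears in $\partial\B_-(\kappa,\delta)$ and then reversing it to match the stated right-hand side; keeping track of the two orientation reversals (one from the declared direction of $\mathfrak{C}_-$, one from the segment) is the only place where an error can creep in.

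\textbf{Main obstacle.} No deep difficulty is expected; the entire content is Cauchy's theorem on a half-disk. The one genuinely delicate point — and the step I would double-check most carefully — is the bookkeeping of orientations together with the verification that the branch cut of each square root really does sit in the half-plane \emph{opposite} to the half-disk under consideration, so that the integrand is honestly holomorphic on the (open) half-disk with only an integrable singularity at the single real point $\mp\kappa$ on the boundary. One should also note that the mild singularity of $\sqrt{\kappa\pm\alpha}$ at that endpoint is harmless: it is $O(|\alpha\pm\kappa|^{1/2})$, hence the contour integrals converge absolutely and a standard limiting argument (shrinking a tiny arc around the endpoint) justifies applying Cauchy's theorem despite the boundary not being smooth there.
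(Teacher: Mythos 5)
Your proposal is correct and takes essentially the same route as the paper: the paper also applies Cauchy's theorem on the upper (resp.\ lower) half-disk, indenting along a small semicircle of radius $\epsilon$ around the branch point $\mp\kappa$ and letting $\epsilon\to 0^+$ using the continuity and vanishing of $\sqrt{\kappa\pm\alpha}\,g$ there --- precisely the ``shrink a tiny arc'' limiting step you invoke (and the point $\mp\kappa$ is the midpoint of the segment, not an endpoint, though this changes nothing). One caveat: your parenthetical gloss that the clockwise $\mathfrak{C}_+$ runs from $-\kappa+\delta$ to $-\kappa-\delta$ is backwards (clockwise traversal of the upper semicircle runs from $-\kappa-\delta$ over the top to $-\kappa+\delta$); your subsequent bookkeeping nonetheless treats $\mathfrak{C}_+$ correctly as the reverse of the positively oriented arc, and the same care, as you indicate, resolves the sign you queried in the second item.
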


\begin{proof}
 We prove the first item. 
 Let $0<\epsilon<<\delta$ be sufficiently small, and let 
  \[
  {\mathfrak{C}_+^\epsilon}:=\Big\{|\alpha+\kappa|=\epsilon:\,\Im(\alpha)\geq 0\Big\}
 \]
 with a  clockwise direction. 
 Since $\sqrt{\kappa+\alpha}$ is analytic in the domain encircled by $(-\delta-\kappa,-\epsilon-\kappa)$, ${\mathfrak{C}_+^\epsilon}$, $(\epsilon-\kappa,\delta-\kappa)$ and $\mathfrak{C}_+$, from Cauchy integral formula,
 \[
  \int_{\mathfrak{C}_+}\sqrt{\kappa+\alpha}\,g(\alpha)\d\alpha=\left(\int_{-\delta-\kappa}^{-\epsilon-\kappa}+\int_{{\mathfrak{C}_+^\epsilon}}+\int_{\epsilon-\kappa}^{\delta-\kappa}\right)\sqrt{\kappa+\alpha}\,g(\alpha)\d\alpha.
 \]
 Since $\sqrt{\kappa+\alpha}\,g(\alpha)$ depends continuously on $\alpha$ in the half disk and equals to $0$ at $-\kappa$, 
 \[
  \lim_{\epsilon\rightarrow 0^+}\left(\int_{-\delta-\kappa}^{-\epsilon-\kappa}+\int_{\mathfrak{C}_\epsilon^+}+\int_{\epsilon-\kappa}^{\delta-\kappa}\right)\sqrt{\kappa+\alpha}\,g(\alpha)\d\alpha=\int_{-\delta-\kappa}^{\delta-\kappa}\sqrt{\kappa+\alpha}\,g(\alpha)\d\alpha.
 \]
Thus the equation holds.

The proof of the second item is similar thus is omitted.

\end{proof}

The results in Lemma \ref{lm:contour} are easily extended to Banach spaces. For $w(\alpha,\cdot)$ with analytic extension described in Theorem \ref{th:ana_ext}, the following equations are obvious results from Lemma \ref{lm:contour}:
\begin{eqnarray}
 \label{eq:cont_banach1}
 && \int_{\mathfrak{C}_+}e^{\i\alpha x_1}w(\alpha,x)\d\alpha=\int_{-\kappa-\delta}^{-\kappa+\delta}e^{\i\alpha x_1}w(\alpha,x)\d\alpha;\\
 \label{eq:cont_banach2}
 && \int_{\mathfrak{C}_-}e^{\i\alpha x_1}w(\alpha,x)\d\alpha=\int_{\kappa-\delta}^{\kappa+\delta}e^{\i\alpha x_1}w(\alpha,x)\d\alpha.
\end{eqnarray}

At the end of this section, we modify the integral contour in \eqref{eq:inverse} and the results are concluded in the following theorem.

\begin{theorem}
 \label{th:modif_cont}

 $k$ satisfies Assumption \ref{asp1} and $\kappa\in(-1/2,1/2)\setminus\{0\}$ is the rounding error of $k$. Then $k=\kappa+\j$ for some $\j\in\N$. Let $0<\delta<|\kappa|$ be a sufficiently small parameter given in Lemma  \ref{lm:contour}. Define 
  \[
 \mathfrak{C}=\Big([-1/2,1/2]\setminus\big[(-\kappa-\delta,-\kappa+\delta)\cup(\kappa-\delta,\kappa+\delta)\big]\Big)\cup \mathfrak{C}_+\cup \mathfrak{C}_-,
\]
where $\mathfrak{C}_+$ and $\mathfrak{C}_-$ are defined in Lemma \ref{lm:contour}.
 Then the integer contour in \eqref{eq:inverse} is replaced by $\mathfrak{C}$:
\begin{equation}
 \label{eq:new_inverse}
  u\left(x\right)=\int_{\mathfrak{C}} e^{\i\alpha x_1}w(\alpha,x)\d\alpha,\quad x\in\Omega_H.
\end{equation}
 
\end{theorem}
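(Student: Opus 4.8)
\textbf{Proof plan for Theorem \ref{th:modif_cont}.}

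The plan is to start from the representation \eqref{eq:inverse}, which writes $u(x)$ as the integral of $e^{\i\alpha x_1}w(\alpha,x)$ over $[-1/2,1/2]$, and to surgically replace the two small subintervals $(-\kappa-\delta,-\kappa+\delta)$ and $(\kappa-\delta,\kappa+\delta)$ around the cutoff values by the half circles $\mathfrak{C}_+$ and $\mathfrak{C}_-$. The whole content of the theorem is contained in the two identities \eqref{eq:cont_banach1} and \eqref{eq:cont_banach2}, which were already derived from Lemma \ref{lm:contour}; so the proof is essentially an assembling step. First I would split the integral in \eqref{eq:inverse} additively as
\[
 u(x)=\int_{[-1/2,1/2]\setminus[(-\kappa-\delta,-\kappa+\delta)\cup(\kappa-\delta,\kappa+\delta)]}e^{\i\alpha x_1}w(\alpha,x)\d\alpha+\int_{-\kappa-\delta}^{-\kappa+\delta}e^{\i\alpha x_1}w(\alpha,x)\d\alpha+\int_{\kappa-\delta}^{\kappa+\delta}e^{\i\alpha x_1}w(\alpha,x)\d\alpha,
\]
which is legitimate because $w(\alpha,\cdot)$ depends continuously on $\alpha\in[-1/2,1/2]$ (as recalled after \eqref{eq:inverse}, citing \cite{Lechl2016}), so each piece is a well-defined Bochner integral in $\widetilde{H}^1_\p(\Omega_H^0)$.

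Next I would invoke \eqref{eq:cont_banach1} to replace the integral over $[-\kappa-\delta,-\kappa+\delta]$ by the integral over $\mathfrak{C}_+$, and \eqref{eq:cont_banach2} to replace the integral over $[\kappa-\delta,\kappa+\delta]$ by the integral over $\mathfrak{C}_-$. To apply these I must check the hypotheses of Lemma \ref{lm:contour}: the analytic extension of $w(\alpha,\cdot)$ to a neighbourhood of $\overline{\B_+(-\kappa,\delta)}$ and of $\overline{\B_-(\kappa,\delta)}$ is exactly what Theorem \ref{th:ana_ext} provides (the branch cuts $Z_-$ and $Z_+$ lie in the lower, resp.\ upper, half plane, hence do not meet the relevant closed half disks), provided $0<\delta<|\kappa|\le k$ is small enough, which is precisely the standing assumption on $\delta$. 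One also needs $w(\alpha,\cdot)\sqrt{\kappa+\alpha}$ (resp.\ $\sqrt{\kappa-\alpha}$) to be continuous up to the cutoff value with value $0$ there; this follows from the decomposition $w=w^1_+ + \sqrt{\kappa+\alpha}\,w^2_+$ of Theorem \ref{th:ana_ext} together with analyticity of $w^1_+, w^2_+$. After the two replacements, the three pieces recombine into a single integral over the contour $\mathfrak{C}=\big([-1/2,1/2]\setminus[(-\kappa-\delta,-\kappa+\delta)\cup(\kappa-\delta,\kappa+\delta)]\big)\cup\mathfrak{C}_+\cup\mathfrak{C}_-$, which is exactly \eqref{eq:new_inverse}.

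I would also note that the statement is phrased for general $\kappa\in(-1/2,1/2)\setminus\{0\}$, whereas Lemma \ref{lm:contour} (and the discussion preceding it) was written for $\kappa\in(0,1/2)$. For $\kappa<0$ the roles of the two cutoff values, and the orientations of the half circles together with which half disk the analytic extension lives on, are mirrored — see Figure \ref{fig:branch_cuts}(b) — so one applies the symmetric version of Lemma \ref{lm:contour}; the argument is otherwise identical, and I would simply remark that the case $\kappa<0$ is handled by the same reasoning with $\B_+$, $\mathfrak{C}_+$, $Z_-$ replaced by their counterparts, or equivalently by relabelling. The only genuinely delicate point — and thus the main obstacle — is the bookkeeping of orientations: one must verify that the clockwise half circle $\mathfrak{C}_+$ (and counter-clockwise $\mathfrak{C}_-$) is traversed so that, when glued to the two remaining straight pieces of $[-1/2,1/2]$, the resulting contour $\mathfrak{C}$ is consistently oriented from $-1/2$ to $1/2$; this is exactly the orientation built into Lemma \ref{lm:contour}, so once that lemma is quoted correctly there is nothing more to do.
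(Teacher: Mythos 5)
Your proposal is correct and follows essentially the same route as the paper: the paper obtains Theorem \ref{th:modif_cont} directly by combining \eqref{eq:cont_banach1}--\eqref{eq:cont_banach2} (themselves consequences of Lemma \ref{lm:contour} applied, via the decomposition $w=w^1_\pm+\sqrt{\kappa\pm\alpha}\,w^2_\pm$ from Theorem \ref{th:ana_ext}, to the two singular subintervals) with the untouched remainder of $[-1/2,1/2]$, which is exactly your split--deform--recombine argument. Your hypothesis checks (analyticity off the cuts $Z_\mp$, orientation of $\mathfrak{C}_\pm$, the $\kappa<0$ case) only make explicit what the paper leaves implicit.
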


\section{Perfectly matched layers}

In this section we follow the method introduced in \cite{Chen2003} for $\alpha$-dependent periodic problem \eqref{eq:per1}-\eqref{eq:per3}.   Although the arguments were made in \cite{Chen2003} for real-valued $\alpha$, everything is extended to complex valued cases without major differences. We only need to be careful about the new square roots in Definition \ref{def:sqrt}.

We add a PML layer above $\Gamma_H$ with thickness $\lambda>0$. To describe the PML layer, we need the complex valued function $s(x_2)$ defined by:
\[
 s(x_2)=1+\rho \widehat{s}(x_2)
\]
where $\rho>0$ is a parameter, $\widehat{s}(x_2)$ is a sufficiently smooth function which vanishes when $x_2\leq H$. For example, the function can be defined by a polynomial:
\[
 \widehat{s}(x_2)=\chi\left(\frac{x_2-H}{\lambda}\right)^m,\quad x_2\in[H,H+\lambda],
\]
where $\chi$ is a fixed complex number with positive real and imaginary parts and $m$ is a positive integer. For simplicity, let $|\chi|=1$. Define the PML parameter 
\[
 \sigma:=\int_H^{H+\lambda}s(x_2)\d x_2=\lambda\left(1+\frac{\rho\chi}{m+1}\right).
\]
Thus $\sigma=|\sigma|e^{\i\tau}$ where  $\tau\in(0,\pi/2)$ and $|\sigma|\approx (m+1)^{-1}\lambda\rho$ when $\rho>>1$. 

For any fixed $\alpha$, the differential operator with the PML layer with parameter $\sigma$ is defined as follows:
\[
 \L_\sigma(\alpha): =s(x_2)\left(\frac{\partial^2}{\partial x_1^2}+2\i\alpha\frac{\partial}{\partial x_1}\right)+\frac{\partial}{\partial x_2}\left(\frac{1}{s(x_2)}\frac{\partial}{\partial x_2}\right)+(k^2-\alpha^2) s(x_2).
\]
Then the new problem with PML layer is described by the following equation:
\begin{equation}
 \label{eq:pml_or}
 \L_\sigma(\alpha) w^{PML}_\sigma(\alpha,\cdot)=f\text{ in }\Omega_{H+\lambda}^0;\quad w^{PML}_\sigma(\alpha,\cdot)=0\text{ on }\Gamma_0\cup\Gamma_{H+\lambda}^0.
\end{equation}
From \cite{Chen2003}, a solution of \eqref{eq:pml_or} satisfies the boundary condition
\begin{equation}
 \label{eq:pml_bc}
 \frac{\partial w^{PML}_\sigma(\alpha,\cdot)}{\partial x_2}=T^{PML}_{\alpha,\sigma} w^{PML}_\sigma(\alpha,\cdot)\text{ on }\Gamma_H^0,
\end{equation}
where $T^{PML}_{\alpha,\sigma}$ is the $(\alpha,\sigma)$-dependent DtN map defined by:
\[
 (T^{PML}_{\alpha,\sigma}\phi)(x_1)=\i\sum_{j\in\Z}\beta_j\coth\left(-\i\beta_j\,\sigma\right)\widehat{\phi}(j)e^{\i j x_1},\quad \phi(x_1)=\sum_{j\in\Z}\widehat{\phi}(j)e^{\i j x_1}.
\]
From similar arguments in \cite{Chand2009}, it is bounded from $H^{1/2}_\p(\Gamma_H^0)$ to $H^{-1/2}_\p(\Gamma_H^0)$. With the DtN map, the problem \eqref{eq:pml_or}-\eqref{eq:pml_bc} is formulated as the following variational problem in $\Omega_H^0$. That is, to find $w^{PML}_\sigma(\alpha,\cdot)\in \widetilde{H}^1_\p(\Omega_H^0)$ such that
\begin{equation}
 \label{eq:var_per_pml}
 \begin{aligned}
 \int_{\Omega_0}\left[\nabla w^{PML}_\sigma(\alpha,\cdot)\cdot\nabla\overline{\phi}-2\i\alpha\frac{\partial w^{PML}_\sigma(\alpha,\cdot)}{\partial x_1}\overline{\phi}-(k^2 n-\alpha^2)w^{PML}_\sigma(\alpha,\cdot)\overline{\phi}\right]&\d x\\
 -2\pi\i\sum_{j\in\Z}\sqrt{k^2-(\alpha+j)^2}\coth\left(-\i\sqrt{k^2-(\alpha+j)^2}\,\sigma\right)\widehat{w}^{PML}_\sigma(\alpha,j)\overline{\widehat{\phi}(j)}&=-\int_{\Omega_0}e^{-\i\alpha x_1}f\overline{\phi}\d x
 \end{aligned}
\end{equation}
holds for any test function $\phi\in \widetilde{H}^1_\p(\Omega_H^0)$. Compare this problem with \eqref{eq:var_per}, the only difference is the DtN map. Similar to previous arguments, we first define the operator depending on $\sigma$:
\[
 D^{PML}_\sigma(\alpha):=A_1+\alpha A_2+\alpha^2 A_3+\sum_{j\in\Z}\sqrt{k^2-(\alpha+j)^2}\coth\left(-\i\sqrt{k^2-(\alpha+j)^2}\,\sigma\right)B_j,
\]
thus \eqref{eq:var_per_pml} is equivalent to solve the problem
\[
 D^{PML}_\sigma(\alpha)w^{PML}_\sigma(\alpha,\cdot)=F(\alpha,\cdot).
\]
To study the convergence of the PML method, it is equivalent to study the convergence of $D^{PML}_\sigma(\alpha)$ to $D(\alpha)$ with respect to $|\sigma|$, where the key is the convergence of $\coth\left(-\i\sqrt{k^2-(\alpha+j)^2}\,\sigma\right)$ to $1$. In \cite{Chen2003}, it has already been proved that for fixed $\alpha\in[-1/2,1/2]\setminus\{-\kappa,\kappa\}$ the convergence is exponential. However, from \cite{Kirsc2021}, only linear convergence was proved  at the cutoff values. In this paper, we avoid the cutoff values by using the modified inverse Floquet-Bloch transform defined in \eqref{eq:new_inverse}. Thus we only need to prove the exponential convergence of $w^{PML}_\sigma(\alpha,\cdot)$ to $w(\alpha,\cdot)$ for all $\alpha\in \mathfrak{C}$, where $\mathfrak{C}$ is defined in Theorem \ref{th:modif_cont}.

To this end, define the function:
\[
 h(z):=\exp\left(-2\i\sqrt{k^2-z^2}\sigma\right).
\]
Then
\[
 \coth\left(-\i\sqrt{k^2-(\alpha+j)^2}\,\sigma\right)-1=\frac{2}{h(\alpha+j)-1},\quad \alpha\in \mathfrak{C}.
\]
 Let $\mathfrak{C}$ be extended as:
\[
\mathfrak{C}_{ext}:= \cup_{j\in\Z}\left(\mathfrak{C}+\left( j,0 \right)^\top\right)\subset\C,
\]
then $\{\alpha+j:\,\alpha\in \mathfrak{C},\,j\in\Z\}=\mathfrak{C}_{ext}$. Then we estimate $\frac{2}{h(z)-1}$ for any $z\in \mathfrak{C}_{ext}$. The extended curve $\mathfrak{C}_{ext}$ is plotted in Figure \ref{fig:esti_h}.  Here we want to draw the reader's attention to the shape of $\mathfrak{C}_{ext}$. 
For simplicity, we make the following assumption for the constant $\tau$ (recall that it is the angle of the PML parameter $\sigma$).

\begin{assumption}
 \label{asp2}
 The angle $\tau$ is assumed to line in the interval $\left(\frac{\pi}{8},\frac{\pi-\arctan 2}{2}\right)$.
\end{assumption}

Note that the results in the following lemmas and theorems are always true for any $\tau\in(0,\pi/2)$.  We keep Assumption \ref{asp2} just want to have a simplified process.

\begin{figure}[tttttt!!!b]
\centering
\includegraphics[width=0.8\textwidth]{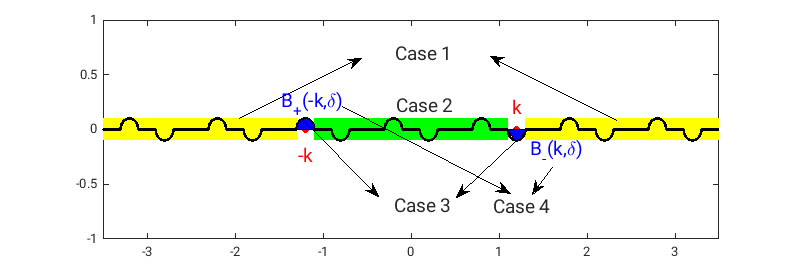} 
\caption{Domain in Lemma \ref{lm:esti_h}. The black curve is $\mathfrak{C}_{ext}$.}
\label{fig:esti_h}
\end{figure}

\begin{lemma}
 \label{lm:esti_h}Suppose $0<\delta<|\kappa|$ defines the curve $\mathfrak{C}$ in Theorem \ref{th:modif_cont}.\\
1)  There is a $\gamma>0$ such that $|h(z)|\geq \exp\left(\gamma \sqrt{\delta}|\sigma|\sqrt{|\Re(z)|+k}\right)$ holds uniformly for $z\in \mathfrak{C}_{ext}$.\\
2) Suppose $\gamma>0$ is the same as in 1), then
\[
 \left|\frac{2\sqrt{k^2-z^2}}{h(z)-1}\right|\leq\gamma^{-1}|\sigma|^{-1}
\]
holds uniformly for $z\in \cup_{j\in\Z}\,\overline{\B_+(-k+j,\delta)}$ and  $ \cup_{j\in\Z}\,\overline{\B_-(k+j,\delta)}$. 
\end{lemma}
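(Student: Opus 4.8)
The plan is to bound $|h(z)|$ from below on $\mathfrak{C}_{ext}$ by controlling the real part of the exponent $-2\i\sqrt{k^2-z^2}\,\sigma$, and then to deduce part 2) from part 1) essentially by a Taylor estimate of $h$ near its ``nearly-real'' zeros. Recall $h(z)=\exp(-2\i\sqrt{k^2-z^2}\,\sigma)$, so $|h(z)| = \exp\bigl(\Re(-2\i\sqrt{k^2-z^2}\,\sigma)\bigr) = \exp\bigl(2\,\Re(\i\sigma\sqrt{k^2-z^2})\bigr)$. Writing $\sigma=|\sigma|e^{\i\tau}$, we have $\i\sigma=|\sigma|e^{\i(\tau+\pi/2)}$, so the exponent is $2|\sigma|\,\Re\!\bigl(e^{\i(\tau+\pi/2)}\sqrt{k^2-z^2}\bigr)$. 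The first step is therefore to understand the argument of $\sqrt{k^2-z^2}$ for $z$ ranging over $\mathfrak{C}_{ext}$, using the branch cut of Definition~\ref{def:sqrt} and the factorization $\sqrt{k^2-z^2}=\sqrt{k+z}\sqrt{k-z}$ that was set up before Theorem~\ref{th:ana_ext}. On the flat pieces of $\mathfrak{C}$ (real $\alpha$ away from $\pm\kappa$) one has $k^2-(\alpha+j)^2$ either positive real (propagating modes, finitely many) or negative real (evanescent modes), so $\sqrt{k^2-z^2}$ is either a positive real or a positive multiple of $\i$; on the small half-circles $\mathfrak{C}_\pm$ the quantity $\sqrt{\kappa\pm\alpha}$ has argument in a controlled range because the half-circle is traversed on one side of the branch cut $Z_\pm$. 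The key geometric fact to extract is that, for every $z\in\mathfrak{C}_{ext}$, the argument of $\sqrt{k^2-z^2}$ lies in a closed subinterval of $(-\pi/2,\pi/2)$ whose endpoints, together with $\tau+\pi/2$, keep $\cos(\arg\sqrt{k^2-z^2}+\tau+\pi/2)$ bounded away from $0$ — and this is exactly where Assumption~\ref{asp2} on $\tau$ is used, to guarantee $\Re\!\bigl(e^{\i(\tau+\pi/2)}\sqrt{k^2-z^2}\bigr)\geq c\,|\sqrt{k^2-z^2}|$ for a fixed $c>0$.

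Granting that, part 1) follows by bounding $|\sqrt{k^2-z^2}|$ from below on $\mathfrak{C}_{ext}$. For $z=\alpha+j$ with $\alpha$ on a half-circle $\mathfrak{C}_\pm$ of radius $\delta$ about $\mp\kappa$, we have $|k\mp z|\gtrsim \delta$ (that is the whole point of the detour) while $|k\pm z|\asymp |\Re z|+k$; for $z$ on the flat part, $|k^2-z^2|\gtrsim \delta(|\Re z|+k)$ as well, since $\delta<|\kappa|$ ensures $\alpha$ stays at distance $\geq\delta$ from every cutoff $\pm\kappa$, hence $||\alpha+j|-k|\geq\delta$ for all $j$, and $||\alpha+j|+k|\gtrsim |\Re z|+k$. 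Therefore $|\sqrt{k^2-z^2}|\gtrsim\sqrt{\delta}\,\sqrt{|\Re z|+k}$, and combining with the previous paragraph gives $|h(z)|\geq\exp\bigl(\gamma\sqrt{\delta}\,|\sigma|\sqrt{|\Re z|+k}\,\bigr)$ for a suitable $\gamma>0$ depending only on $k$ and $\tau$ (not on $\delta$, $\sigma$, or $z$).

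For part 2), fix $j$ and consider $z\in\overline{\B_+(-k+j,\delta)}$ (the case $\overline{\B_-(k+j,\delta)}$ is symmetric). Here $\sqrt{k^2-z^2}$ is small — it vanishes at $z=-k+j$ — so $h(z)-1$ is small and we cannot simply invoke part 1); instead write $h(z)-1 = e^{\zeta}-1$ with $\zeta=-2\i\sigma\sqrt{k^2-z^2}$ and use the elementary inequality $|e^\zeta-1|\geq |\zeta|\,\cdot\,\frac{|e^\zeta-1|}{|\zeta|}$ together with $\Re\zeta\geq 0$ on this half-disk (again by the argument/Assumption~\ref{asp2} bookkeeping applied on the half-disk, where $\sqrt{k^2-z^2}$ has argument in a half-open quadrant), which forces $|e^\zeta-1|\geq c\,|\zeta|$ for $\Re\zeta\ge 0$ with an absolute $c$ — indeed one can even get $|e^\zeta - 1| \ge |\zeta|$ when $\Re \zeta \ge 0$ is small and more generally $|e^\zeta-1|\ge \Re\zeta$ always, plus $|e^\zeta-1|\ge$ something like $\tfrac{2}{\pi}|\Im\zeta|$ on a bounded imaginary range, so that $|e^\zeta-1|\gtrsim|\zeta|$. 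Then
\[
 \left|\frac{2\sqrt{k^2-z^2}}{h(z)-1}\right|
 = \left|\frac{2\sqrt{k^2-z^2}}{e^\zeta-1}\right|
 \leq \frac{2|\sqrt{k^2-z^2}|}{c\,|\zeta|}
 = \frac{2|\sqrt{k^2-z^2}|}{c\cdot 2|\sigma|\,|\sqrt{k^2-z^2}|}
 = \frac{1}{c\,|\sigma|},
\]
and one checks the constant works out so that $c^{-1}\le\gamma^{-1}$ with the same $\gamma$ as in part 1) (shrinking $\gamma$ once more if necessary, since the statement only asserts existence of a common $\gamma$). The main obstacle is the first paragraph: the careful bookkeeping of the argument of $\sqrt{k^2-z^2}=\sqrt{k+z}\sqrt{k-z}$ across all pieces of $\mathfrak{C}_{ext}$ and both half-disk families, respecting the branch cut of Definition~\ref{def:sqrt}, and verifying that the resulting argument range is compatible with $\tau$ in the sense that the cosine factor stays positive — this is the step where Assumption~\ref{asp2} is genuinely needed and where a sloppy branch choice would break the estimate; everything after that is the elementary inequality $|e^\zeta-1|\gtrsim|\zeta|$ for $\Re\zeta\ge0$ and routine lower bounds on $|k^2-z^2|$.
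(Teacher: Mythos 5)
Your strategy is essentially the paper's: write $\sqrt{k^2-z^2}=r_z e^{\i\theta}$, control the argument $\theta$ piece by piece on $\mathfrak{C}_{ext}$ so that Assumption~\ref{asp2} keeps the relevant trigonometric factor bounded away from zero, bound $r_z\gtrsim\sqrt{\delta}\sqrt{|\Re(z)|+k}$ on the curve, and near the zeros of $k^2-z^2$ trade the exponential for a linear lower bound to obtain the $|\sigma|^{-1}$ estimate. However, two steps fail as written. First, a sign slip corrupts the very condition you propose to verify: $\Re(-2\i\sigma w)=-2\Re(\i\sigma w)$, so in fact $|h(z)|=\exp\big(2r_z|\sigma|\sin(\theta+\tau)\big)$, and what must be bounded below is $\sin(\theta+\tau)$, not $\cos\big(\theta+\tau+\tfrac{\pi}{2}\big)=-\sin(\theta+\tau)$; with your stated condition the bound goes the wrong way, and it is moreover unverifiable, since on $\mathfrak{C}_{ext}$ one has $\theta+\tau\in(0,\pi)$ and hence $\sin(\theta+\tau)>0$. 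Relatedly, $\arg\sqrt{k^2-z^2}$ does \emph{not} stay in a closed subinterval of $(-\pi/2,\pi/2)$: near the evanescent part $|\Re(z)|\geq k+\delta$ the branch of Definition~\ref{def:sqrt} puts $\theta$ near and partly above $\pi/2$ (the paper's Case~1 gives $\theta\in\big(\tfrac{\pi-\arctan 2}{2},\tfrac{\pi+\arctan 2}{2}\big)$). The correct geometric statement, and the one Assumption~\ref{asp2} is tailored to, is that $\theta+\tau$ remains in a compact subinterval of $(0,\pi)$ on all pieces of $\mathfrak{C}_{ext}$.

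Second, in part 2) the blanket inequality $|e^\zeta-1|\geq c|\zeta|$ for $\Re\zeta\geq 0$ with an absolute constant $c$ is false (take $\zeta=2\pi\i n$), and your fallback of a ``bounded imaginary range'' is not available here: $|\zeta|=2r_z|\sigma|$ with $|\sigma|$ large, so $|\Im\zeta|$ is unbounded as $|\sigma|\to\infty$. The repair uses information you already have from the same angle bookkeeping but did not invoke: on the half disks $\theta\in[0,\pi/2]$, so by Assumption~\ref{asp2} one gets $\Re\zeta=2r_z|\sigma|\sin(\theta+\tau)\geq \gamma\,|\zeta|$, i.e.\ $\zeta$ lies in a fixed sector strictly inside the right half plane, not merely $\Re\zeta\geq 0$. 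Then $|e^\zeta-1|\geq e^{\Re\zeta}-1\geq\Re\zeta\geq 2\gamma r_z|\sigma|$, which is exactly the paper's Case~4 (mean value theorem) argument and yields $\big|2\sqrt{k^2-z^2}/(h(z)-1)\big|\leq \gamma^{-1}|\sigma|^{-1}$. With these two corrections your outline coincides with the paper's proof; the remaining lower bounds on $|k^2-z^2|$ that you sketch are the same as the paper's Cases~1--3.
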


\begin{proof}We prove this lemma with four different cases.

\noindent
{\em Case 1.} Let $z\in\C$ such that $|\Re z|\geq k+\delta$ and $|\Im z|\leq\delta$ (yellow domain in Figure \ref{fig:esti_h}). 
\\ Let $z=a+\i b$ where $|a|\geq k+\delta$ and $|b|\leq\delta$. Then
 \[
  \sqrt{k^2-z^2}=\sqrt{k^2+b^2-a^2-2\i ab}.
 \]
Let $\sqrt{k^2-z^2}=r_z e^{\i\theta}$. From the element computation with the re-definition of the square root,
 \[
  r_z=\left|\sqrt{k^2-z^2}\right|\geq \frac{\sqrt{(k+|a|)\delta}}{2}\quad\text{ and }\quad\theta\in\left(\frac{\pi-\arctan 2}{2},\frac{\pi+\arctan 2}{2}\right)\subset\left(\frac{\pi}{4},\frac{3\pi}{4}\right).
 \]
 This implies that
 \[
  \Re\left(-2\i r_z |\sigma|e^{\i(\theta+\tau)}\right)=2 r_z|\sigma|\sin(\theta+\tau).
 \]
From Assumption \ref{asp2}, 
 \[
  \theta+\tau\in\left(\frac{\pi-\arctan 2}{2}+\frac{\pi}{8},\frac{\pi+\arctan 2}{2}+\frac{\pi-\arctan 2}{2}\right)\subset (0,\pi).
 \]
There is a $\gamma_1>0$ such that $\sin(\theta+\tau)\geq \gamma_1$. This implies that
\[
 \Re\left(-2\i r_z |\sigma|e^{\i(\theta+\tau)}\right)\geq \sqrt{(k+|a|)\delta}|\sigma|\gamma_1.
\]
 Thus
 \[
 | h(z)|=\left|e^{-2\i r_z |\sigma|e^{\i(\theta+\tau)}}\right|\geq \exp\left( \sqrt{(k+|a|)\delta}|\sigma|\gamma_1\right).
 \]

 \noindent
{\em Case 2.} Let $z\in\C$ such that $|\Re z|\leq k-\delta$ and $|\Im z|\leq\delta$ (green domain in Figure \ref{fig:esti_h}).\\ Let $z=a+\i b$ where $|a|<k-\delta$ and $|b|<\delta$. 
Similarly $\sqrt{k^2-z^2}=r_z e^{\i\theta}$ where
 \[
  r_z=\left|\sqrt{k^2-z^2}\right|\geq \sqrt{\delta(|a|+k)}\quad\text{ and }\quad\theta\in\left(-\frac{\pi}{8},\frac{\pi}{8}\right).
 \]
From Assumption \ref{asp2} again we can also find a $\gamma_2>0$ such that $\sin(\theta+\tau)\geq \gamma_2/2$, thus that $$|h(z)|\geq \exp(\gamma_2\sqrt{(k+|a|)\delta}|\sigma|)$$.

\noindent
{\em Case 3.} Let $z\in \big\{-k+\delta e^{\i\omega}:\,\omega\in[0,\pi]\big\}$ or  $z\in \big\{k-\delta e^{\i\omega}:\,\omega\in[0,\pi]\big\}$ (half circles outside green/yellow domains in Figure \ref{fig:esti_h}).\\
Let $z=-k+\delta e^{\i\omega}$ or $z=k-\delta e^{\i\omega}$ for $\omega=[0,\pi]$. Still let $z=a+\i b$ then $|a|=k-\delta\cos\omega$. From direct calculation,
\[
 \sqrt{k^2-z^2}=\sqrt{2k\delta e^{\i\omega}-\delta^2 e^{2\i\omega}}=r_z e^{\i \theta}
\]
where
\[
 r_z\geq \sqrt{2k\delta-\delta^2}>\frac{\sqrt{\delta(k+|a|)}}{2}\quad\text{ and }\quad \theta\in \left[0,\frac{\pi}{2}\right].
\]  
With Assumption \ref{asp2} again, there is a $\gamma_3>0$ such that $\sin(\theta+\tau)\geq \gamma_3$. Thus 
\[
 |h(z)|\geq \exp\left(\sqrt{\delta (k+|a|)}|\sigma|\gamma_3\right).
\]

\noindent
{\em Case 4.} Let  $z\in \big\{-k+\xi e^{\i\omega}:\,\omega\in[0,\pi]\big\}$ or  $z\in \big\{k-\xi e^{\i\omega}:\,\omega\in[0,\pi]\big\}$ where $\xi\in[0,\delta]$ (blue half disks domains in Figure \ref{fig:esti_h}).\\
We still let $\sqrt{k^2-z^2}:=r_z e^{\i\theta}$ then $r_z\approx \sqrt{2k\xi}$ is small since $\xi\in[0,\delta]$. Similar to Case 3, we have the following estimate:
\[
\left|\frac{2\sqrt{k^2-z^2}}{h(z)-1}\right|=\frac{2 r_z }{|h(z)|-1}\leq  \frac{2 r_z}{\exp(2\gamma_3 |\sigma|r_z)-1}.
 \]
From the mean value theorem, there is a $\epsilon\in[0,2\gamma_3|\sigma|r_z]$  such that
\[
 \exp(2\gamma_3 |\sigma|r_z)-1=2\gamma_3|\sigma|r_z\exp(\epsilon)\geq 2\gamma_3|\sigma|r_z.
\]
This implies that
\[
 \left|\frac{2\sqrt{k^2-z^2}}{h(z)-1}\right|\leq \frac{2 r_z}{2\gamma_3|\sigma|r_z}=\frac{1}{\gamma_3|\sigma|}.
\]
The above inequality holds uniformly for all $r\in[0,\delta]$ and $\omega\in[0,\pi]$, where $\delta>0$ is sufficiently small.

\vspace{0.3cm}

\noindent
We conclude our proof as follows.\\
\vspace{0.1cm}

\noindent
For 1), from the above arguments, let $\gamma:=\min\{\gamma_1,\,\gamma_2,\,\gamma_3\}$, then the following inequality holds uniformly:
\[
 |h(z)|\geq \exp\left(\gamma \sqrt{\delta}|\sigma|\sqrt{|\Re(z)|+k}\right),
\]
where $z$ lies in the area in any of the three cases. Since the expanded curve $\mathfrak{C}_{ext}$ is included in the union of the Case 1,2 and 3, we finally get the exponential decay of $|h(z)|$ with for all $z\in \mathfrak{C}_{ext}$. \\
\vspace{0.1cm}

\noindent
For 2), we only need to combine the results in Case 1,2 and 4. With the fact that $\cup_{j\in\Z}\,\overline{\B_+(-k+j,\delta)}$ and  $ \cup_{j\in\Z}\,\overline{\B_-(k+j,\delta)}$ are subsets of the union of domains in Case 1,2 and 4, the proof is finished.

\end{proof}

With this result, we are prepared to estimate the convergence of $D^{PML}(\alpha)$ to $D(\alpha)$ with respect to the parameters $\delta$ and $\sigma$.

\begin{theorem}
 \label{th:conv_D}
 The operator $D_\sigma^{PML}(\alpha)$ converges to $D(\alpha)$ uniformly with respect to $\alpha$, and satisfies the following estimation:
 \[
  \left\|D^{PML}_\sigma(\alpha)-D(\alpha)\right\|\leq C e^{-\gamma \sqrt{k\delta}|\sigma|}\text{ for all }\alpha\in \mathfrak{C},
 \]
 and 
  \[
  \left\|D^{PML}_\sigma(\alpha)-D(\alpha)\right\|\leq C |\sigma|^{-1}\text{ for all }\alpha\in \overline{\B_+(-\kappa,\delta)}\cup\overline{\B_-(\kappa,\delta)},
 \]
 where $C$ and $\gamma$ do not depend on $\alpha$ and the parameters $\delta$ and $\sigma$. Moreover, the solution $w^{PML}_\sigma(\alpha,\cdot)$ also converges to $w(\alpha,\cdot)$ uniformly:
 \begin{equation}\label{eq:conv_w}
 \left\|w^{PML}_\sigma(\alpha,\cdot)-w(\alpha,\cdot)\right\|_{\widetilde{H}^1_\p(\Omega_H^0)}\leq C e^{-\gamma\sqrt{k\delta}|\sigma|}\text{ for all }\alpha\in\mathfrak{C}
\end{equation}
and 
 \begin{equation}\label{eq:conv_w_1}
 \left\|w^{PML}_\sigma(\alpha,\cdot)-w(\alpha,\cdot)\right\|_{\widetilde{H}^1_\p(\Omega_H^0)}\leq C |\sigma|^{-1}\text{ for all }\alpha\in\overline{\B_+(-\kappa,\delta)}\cup\overline{\B_-(\kappa,\delta)}.
\end{equation}
\end{theorem}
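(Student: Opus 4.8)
The plan is to reduce everything to the scalar estimates of Lemma~\ref{lm:esti_h} together with a Neumann-series / resolvent-stability argument. First I would note that the two operators differ only through the DtN part: writing out the definitions of $D(\alpha)$ and $D^{PML}_\sigma(\alpha)$, we have
\[
 D^{PML}_\sigma(\alpha)-D(\alpha)=\sum_{j\in\Z}\sqrt{k^2-(\alpha+j)^2}\,\Big(\coth\big(-\i\sqrt{k^2-(\alpha+j)^2}\,\sigma\big)-1\Big)B_j
 =\sum_{j\in\Z}\frac{2\sqrt{k^2-(\alpha+j)^2}}{h(\alpha+j)-1}\,B_j.
\]
Since each $B_j$ has operator norm bounded by $\|B_j\|\le 2\pi\|\gamma_0\|^2$ where $\gamma_0:\widetilde H^1_\p(\Omega_H^0)\to L^2(\Gamma_H^0)$ is the trace operator, and since for $\phi,\psi$ the pairing $\langle B_j\psi,\phi\rangle$ only involves the $j$-th Fourier coefficients, the family $\{B_j\}$ is "orthogonal" in the sense that $\big\|\sum_j c_j B_j\big\|\le 2\pi\|\gamma_0\|^2\sup_j|c_j|$ (this follows from Parseval on $\Gamma_H^0$). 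Hence
\[
 \big\|D^{PML}_\sigma(\alpha)-D(\alpha)\big\|\le 2\pi\|\gamma_0\|^2\,\sup_{j\in\Z}\left|\frac{2\sqrt{k^2-(\alpha+j)^2}}{h(\alpha+j)-1}\right|.
\]

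Now I would plug in Lemma~\ref{lm:esti_h}. For $\alpha\in\mathfrak{C}$ the set $\{\alpha+j:j\in\Z\}=\mathfrak{C}_{ext}$, so part~1) of the lemma gives $|h(z)|\ge\exp(\gamma\sqrt{\delta}|\sigma|\sqrt{|\Re z|+k})$ for every $z\in\mathfrak{C}_{ext}$; in particular $|h(z)|\ge\exp(\gamma\sqrt{k\delta}|\sigma|)$, so $|h(z)-1|\ge\tfrac12\exp(\gamma\sqrt{k\delta}|\sigma|)$ once $|\sigma|$ is large enough. Meanwhile $|\sqrt{k^2-z^2}|$ grows only like $\sqrt{|\Re z|+k}\cdot\sqrt{|\Re z|-k}$ on the far parts of $\mathfrak{C}_{ext}$, and the exponential $\exp(-\gamma\sqrt{\delta}|\sigma|\sqrt{|\Re z|+k})$ beats any polynomial growth in $|\Re z|$, so $\sup_j\big|2\sqrt{k^2-(\alpha+j)^2}/(h(\alpha+j)-1)\big|\le Ce^{-\gamma\sqrt{k\delta}|\sigma|}$ uniformly in $\alpha\in\mathfrak{C}$; this is the first estimate. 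For $\alpha\in\overline{\B_+(-\kappa,\delta)}\cup\overline{\B_-(\kappa,\delta)}$, exactly one translate $\alpha+j$ lies in $\overline{\B_+(-k+j,\delta)}$ resp.\ $\overline{\B_-(k+j,\delta)}$ (namely $j=\j$ resp.\ $j=-\j$, after the relabelling $k=\kappa+\j$), and for that single index part~2) of the lemma gives the bound $\gamma^{-1}|\sigma|^{-1}$ directly, while all other indices $\alpha+j$ still sit in the Case~1/2 region and are exponentially small; taking the sup yields the second estimate $\|D^{PML}_\sigma(\alpha)-D(\alpha)\|\le C|\sigma|^{-1}$.

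For the solution estimates I would use the standard perturbation-of-inverse argument. Since $D(\alpha)$ is invertible for every $\alpha$ in the relevant compact set and depends continuously (indeed analytically, by Theorem~\ref{th:ana_ext}) on $\alpha$, the norm $\|D^{-1}(\alpha)\|$ is bounded by some $M$ uniformly on $\mathfrak{C}$ and on $\overline{\B_+(-\kappa,\delta)}\cup\overline{\B_-(\kappa,\delta)}$. Once $\|D^{PML}_\sigma(\alpha)-D(\alpha)\|\le\tfrac1{2M}$ — which holds for $|\sigma|$ large in the first regime, and for $|\sigma|$ large in the second regime too since $C|\sigma|^{-1}\to0$ — the Neumann series shows $D^{PML}_\sigma(\alpha)$ is invertible with $\|(D^{PML}_\sigma(\alpha))^{-1}\|\le 2M$, and
\[
 w^{PML}_\sigma(\alpha,\cdot)-w(\alpha,\cdot)=\big(D^{PML}_\sigma(\alpha)\big)^{-1}\big(D(\alpha)-D^{PML}_\sigma(\alpha)\big)D^{-1}(\alpha)F(\alpha,\cdot),
\]
so $\|w^{PML}_\sigma(\alpha,\cdot)-w(\alpha,\cdot)\|_{\widetilde H^1_\p(\Omega_H^0)}\le 2M^2\|F(\alpha,\cdot)\|\,\|D^{PML}_\sigma(\alpha)-D(\alpha)\|$, and $\|F(\alpha,\cdot)\|$ is bounded uniformly in $\alpha$ because $e^{-\i\alpha x_1}f$ is. Substituting the two operator bounds gives \eqref{eq:conv_w} and \eqref{eq:conv_w_1}.

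The main obstacle I anticipate is the uniformity in $\alpha$ of the bound on $\|D^{-1}(\alpha)\|$. On $\mathfrak{C}$ the curve is compact and $D$ is continuous, so this is routine; but near the cutoff values $D^{-1}(\alpha)$ genuinely blows up as $\alpha\to\pm\kappa$ (that is precisely why we detoured onto the half-circles $\mathfrak{C}_\pm$), so one must check that along $\mathfrak{C}_+$ and $\mathfrak{C}_-$ — which stay at distance $\delta$ from the singularities — $D^{-1}$ remains bounded, using the decomposition $D^{-1}(\alpha)=D^1_+(\alpha)+\sqrt{\kappa+\alpha}\,D^2_+(\alpha)$ from Section~3 (and its mirror near $\kappa$), which is analytic hence bounded on the compact half-circle. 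The second potential subtlety is that on $\overline{\B_+(-\kappa,\delta)}\cup\overline{\B_-(\kappa,\delta)}$ the relevant "$D(\alpha)$" is the analytically continued operator, and one must make sure the continued $D(\alpha)$ is still invertible there with uniformly bounded inverse — again this follows from the analytic decomposition of Theorem~\ref{th:ana_ext} and compactness of the closed half-disks, provided $\delta$ is chosen small enough that the Neumann series defining $D^1_\pm,D^2_\pm$ converge.
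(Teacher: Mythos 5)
Your proposal is correct and follows essentially the same route as the paper: write $D^{PML}_\sigma(\alpha)-D(\alpha)$ as a diagonal Fourier multiplier with symbol $2\sqrt{k^2-(\alpha+j)^2}/(h(\alpha+j)-1)$, control it through Lemma \ref{lm:esti_h} (part 1 on $\mathfrak{C}_{ext}$ for the exponential bound, part 2 for the half-disks --- where in fact \emph{every} translate $\alpha+j$, not only $j=\mp\j$, lies in the union $\cup_{j}\overline{\B_\pm(\mp k+j,\delta)}$, which is exactly how the paper disposes of that case), and then pass to the solutions by the perturbation-of-inverse argument that the paper states only implicitly, with the uniform boundedness of $D^{-1}(\alpha)$ on the half-circles and half-disks supplied, as you note, by the decomposition $D^{-1}=D^1_\pm+\sqrt{\kappa\pm\alpha}\,D^2_\pm$ of Theorem \ref{th:ana_ext}. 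The only technical divergence is in the bilinear estimate: you take the supremum over $j$ of the full scalar multiplier against $L^2(\Gamma_H^0)$ traces, so the exponential must also absorb the $O(|j|)$ growth of $|\sqrt{k^2-(\alpha+j)^2}|$ (costing an adjustment of $\gamma$), whereas the paper splits that weight by Cauchy--Schwarz between the two factors and bounds the traces in $H^{1/2}_\p(\Gamma_H^0)$ via the trace theorem; both yield the stated estimates.
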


\begin{proof}We first prove the uniform convergence \eqref{eq:conv_w}. 
 From direct calculation, for any $\phi,\,\psi\in\widetilde{H}^1_\p(\Omega_H^0)$, 
 \begin{equation*}
 \begin{aligned}
  \left<\left(D^{PML}_\sigma(\alpha)-D(\alpha)\right)\phi,\psi\right>&=-2\pi\i\sum_{j\in\Z}\sqrt{k^2-(\alpha+j)^2}\left[\coth\left(-\i\sqrt{k^2-(\alpha+j)^2}\sigma-1\right)\right]\widehat{\phi}(j)\overline{\widehat{\psi}(j)}\\
  &=-4\pi\i\sum_{j\in\Z}\frac{\sqrt{k^2-(\alpha+j)^2}}{h(\alpha+j)-1}\widehat{\phi}(j)\overline{\widehat{\psi}(j)}.
  \end{aligned}
 \end{equation*}
 Since $\phi,\,\psi\in\widetilde{H}^1_\p(\Omega_H^0)$, $\phi\big|_{\Gamma_H^0},\,\phi\big|_{\Gamma_H^0}\in H^{1/2}_\p(\Gamma_H^0)$. Thus 
 \[
  \|\phi\|^2_{H^{1/2}_\p(\Gamma_H^0)}=\sum_{j\in\Z}(1+j^2)^{1/2}\left|\widehat{\phi}(j)\right|^2<\infty,\quad \|\psi\|^2_{H^{1/2}_\p(\Gamma_H^0)}=\sum_{j\in\Z}(1+j^2)^{1/2}\left|\widehat{\psi}(j)\right|^2<\infty.
 \]
 We check the finite series with positive integer $N$:
 \[
  S_N:=-4\pi\i\sum_{j=-N}^N\frac{\sqrt{k^2-(\alpha+j)^2}}{h(\alpha+j)-1}\widehat{\phi}(j)\overline{\widehat{\psi}(j)}.
 \]
 With the result of Lemma \ref{lm:esti_h}, for all $\alpha\in \mathfrak{C}$ and $j\in\Z$, $|h(\alpha+j)|\geq \exp\left(\gamma\sqrt{\delta}|\sigma|\sqrt{k}\right)$ holds uniformly. When the parameters $|\sigma|$ is sufficiently large, we conclude that
 \[
  \left|\frac{4\pi \i}{h(\alpha+j)-1}\right|\leq e^{-\gamma\sqrt{k\delta}|\sigma|}
 \]
holds uniformly. Note that the constant $\gamma$ is adjusted. Then from Cauchy-Schwarz inequality,
 \begin{equation*}
 \begin{aligned}
 \left|S_N\right|&\leq e^{-\gamma\sqrt{k\delta}|\sigma|}\sum_{j=-N}^N \left|\sqrt{k^2-(\alpha+j)^2}\right|\left|\widehat{\phi}(j)\right|\left|\widehat{\psi}(j)\right|\\
 &\leq e^{-\gamma\sqrt{k\delta}|\sigma|}\left[\sum_{j=-N}^N\left|\sqrt{k^2-(\alpha+j)^2}\right| \left|\widehat{\phi}(j)\right|^2\right]^{1/2}\left[\sum_{j=-N}^N\left|\sqrt{k^2-(\alpha+j)^2}\right| \left|\widehat{\psi}(j)\right|^2\right]^{1/2}\\
 &\leq Ce^{-\gamma\sqrt{k\delta}|\sigma|}\left[\sum_{j=-N}^N(1+j^2)^{1/2} \left|\widehat{\phi}(j)\right|^2\right]^{1/2}\left[\sum_{j=-N}^N(1+j^2)^{1/2} \left|\widehat{\psi}(j)\right|^2\right]^{1/2}\\
 &= Ce^{-\gamma\sqrt{k\delta}|\sigma|} \|\phi\|_{H^{1/2}_\p(\Gamma_H^0)} \|\psi\|_{H^{1/2}_\p(\Gamma_H^0)},
  \end{aligned}
 \end{equation*}
 where the constant $C$ is chosen such that the inequality holds uniformly for all $\alpha\in \mathfrak{C}$ and $j\in \Z$. From trace theorem, 
\[
  \left|S_N\right|\leq Ce^{-\gamma\sqrt{k\delta}|\sigma|} \|\phi\|_{\widetilde{H}^1_\p(\Omega_H^0)} \|\psi\|_{\widetilde{H}^{1}_\p(\Omega_H^0)}.
\]
Since the above inequality holds uniformly for all positive integer $N$, let $N\rightarrow\infty$ we have:
\[
 \left|\left<\left(D^{PML}_\sigma(\alpha)-D(\alpha)\right)\phi,\psi\right>\right|\leq  Ce^{-\gamma\sqrt{k\delta}|\sigma|} \|\phi\|_{\widetilde{H}^1_\p(\Omega_H^0)} \|\psi\|_{\widetilde{H}^{1}_\p(\Omega_H^0)}.
\]
This implies that $D^{PML}_\sigma(\alpha)$ converges to $D(\alpha)$ uniformly with respect to $\alpha\in\mathfrak{C}$ and the convergence is exponential with respect to $|\sigma|$. This also implies that when $|\sigma|$ is sufficiently large, $D^{PML}_\sigma(\alpha)$ is invertible and 
\[
 \left\|w^{PML}_\sigma(\alpha,\cdot)-w(\alpha,\cdot)\right\|_{\widetilde{H}^1_\p(\Omega_H^0)}\leq C e^{-\gamma\sqrt{k\delta}|\sigma|}
\]
holds uniformly for $\alpha\in \mathfrak{C}$.
\\
\vspace{0.2cm}

Now let's focus on the proof of \eqref{eq:conv_w_1}. 
The uniform convergence with respect to $\alpha\in\overline{\B_+(-\kappa,\delta)}\cup\overline{\B_-(\kappa,\delta)}$ is proved in the similar way, with the second result in Lemma \ref{lm:esti_h}. Thus we omit it here.

\end{proof}

Similar to the modification of the inverse Bloch transform \eqref{eq:new_inverse}, we also need to modify the integral contour for the inverse transform of the function $w^{PML}_\sigma(\alpha,\cdot)$. Thus we need the following study for the dependence of $w^{PML}_\sigma(\alpha,\cdot)$ on $\alpha$.

\begin{lemma}
 \label{lm:ext_ana_pml}
For sufficiently large $|\sigma|$, the solution $w^{PML}_\sigma(\alpha,\cdot)$ is analytic with respect to $\alpha$ in small neighbourhoods of the half disks $\overline{\B_+(-\kappa,\delta)}$ and $\overline{\B_-(\kappa,\delta)}$.
\end{lemma}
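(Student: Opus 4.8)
The plan rests on a structural advantage of $D^{PML}_\sigma(\alpha)$ over $D(\alpha)$: whereas $D(\alpha)$ has genuine square--root branch points at the cutoff values, the symbol entering $D^{PML}_\sigma(\alpha)$ is single valued. Indeed, writing $\mu=\sqrt{k^2-(\alpha+j)^2}$,
\[
 \mu\coth\left(-\i\mu\sigma\right)=\left(-\i\sigma\right)^{-1}\cosh\left(-\i\mu\sigma\right)\left(\frac{\sinh\left(-\i\mu\sigma\right)}{-\i\mu\sigma}\right)^{-1},
\]
and both $\cosh(-\i\mu\sigma)$ and $\sinh(-\i\mu\sigma)/(-\i\mu\sigma)$ are entire and \emph{even} in $\mu$. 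Hence there is a meromorphic function $\Phi_\sigma$ of one complex variable with $\sqrt{\zeta}\,\coth(-\i\sqrt{\zeta}\,\sigma)=\Phi_\sigma(\zeta)$, for which the cut of Definition \ref{def:sqrt} is irrelevant, which is analytic at $\zeta=0$ (the simple pole of $\coth$ being cancelled), and whose only poles are $\{n^2\pi^2/\sigma^2:n\in\N\}$ --- all on the ray $\{t\,e^{-2\i\tau}:t>0\}$. Consequently the coefficient of $B_j$ in $D^{PML}_\sigma(\alpha)$, namely $\Phi_\sigma\bigl(k^2-(\alpha+j)^2\bigr)$, is analytic in $\alpha$ on any set where $k^2-(\alpha+j)^2$ avoids that ray.

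First I would verify that $k^2-(\alpha+j)^2$ does avoid the ray, uniformly in $j\in\Z$, when $\alpha$ ranges over $\overline{\B_+(-\kappa,\delta)}$ and $\delta>0$ is small (in terms of $k$ only, not of $\sigma$). For $j\neq-\j$ this is automatic, since $k^2-(\alpha+j)^2$ is bounded away from $0$ with argument close to $0$ or to $\pm\pi$, uniformly over $j$. For the single critical index $j=-\j$ one has $k^2-(\alpha+j)^2=(\kappa+\alpha)(\kappa+2\j-\alpha)$; on $\overline{\B_+(-\kappa,\delta)}$ the factor $\kappa+\alpha$ stays in the closed upper half plane while $\kappa+2\j-\alpha$ has real part $\ge 2k-\delta>0$ and $|\Im|\le\delta$, so $\arg\bigl(k^2-(\alpha+j)^2\bigr)$ stays within $O(\delta)$ of $[0,\pi]$, hence (for $\delta$ small) away from the direction $-2\tau\in(-\pi,0)$. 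Near $\kappa$ the situation is mirror symmetric, using $\Im\alpha\le 0$ on $\overline{\B_-(\kappa,\delta)}$ and $k^2-(\alpha+\j)^2=(\kappa+2\j+\alpha)(\kappa-\alpha)$. Combined with the exponential-in-$|j|$ decay of $\coth(-\i\mu\sigma)-1$ obtained in the proof of Theorem \ref{th:conv_D}, the defining series of $D^{PML}_\sigma(\alpha)$ then converges locally uniformly, with analytic summands, on an open neighbourhood of each half disk, so $D^{PML}_\sigma(\alpha)$ is analytic there.

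Next I would pass from the operator to $w^{PML}_\sigma(\alpha,\cdot)=\bigl(D^{PML}_\sigma(\alpha)\bigr)^{-1}F(\alpha,\cdot)$. By Theorem \ref{th:ana_ext} and the construction preceding it, $D^{-1}(\alpha)=D^1_+(\alpha)+\sqrt{\kappa+\alpha}\,D^2_+(\alpha)$ is continuous --- hence bounded, say by $M$ --- on the compact set $\overline{\B_+(-\kappa,\delta)}$ (the open part of $Z_-$ is missed and $\sqrt{\kappa+\alpha}$ vanishes at the endpoint $-\kappa$), and analogously near $\kappa$. By the second estimate of Theorem \ref{th:conv_D}, $\|D^{PML}_\sigma(\alpha)-D(\alpha)\|\le C|\sigma|^{-1}$ on the half disks, so as soon as $CM|\sigma|^{-1}<1$ the identity $D^{PML}_\sigma(\alpha)=D(\alpha)\bigl(I+D^{-1}(\alpha)\bigl(D^{PML}_\sigma(\alpha)-D(\alpha)\bigr)\bigr)$ shows $D^{PML}_\sigma(\alpha)$ is invertible on a neighbourhood of each half disk. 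An analytic operator--valued function that is pointwise boundedly invertible has analytic inverse, and $F(\alpha,\cdot)$ is entire in $\alpha$; hence $w^{PML}_\sigma(\alpha,\cdot)$ is analytic in neighbourhoods of $\overline{\B_+(-\kappa,\delta)}$ and $\overline{\B_-(\kappa,\delta)}$, which is the claim.

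The step I expect to be the main obstacle is the uniform-in-$j$ bookkeeping of arguments in the second paragraph. For large $|\sigma|$ the poles $n^2\pi^2/\sigma^2$ cluster toward the origin along the ray $\arg=-2\tau$, so a modulus bound is useless; one must argue with arguments, and the decisive point is that on the \emph{correct} half disk the critical factor $\kappa+\alpha$ can never leave the closed upper half plane, so $(\kappa+\alpha)(\kappa+2\j-\alpha)$ can never acquire the negative argument of the poles. A minor nuisance is that an honest open neighbourhood of $\overline{\B_+(-\kappa,\delta)}$ dips slightly below the real axis near the diameter; one takes it thin enough (its thickness may depend on $\sigma$) that $\arg(\kappa+\alpha)$ stays above $-2\tau$ there, which costs nothing later because the subsequent contour--deformation argument only uses analyticity on the closed half disks.
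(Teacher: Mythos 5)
Your proposal is correct, and its skeleton is the one the paper uses: bound $D^{-1}(\alpha)$ uniformly on the closed half disks via the decomposition $D^{-1}=D^1_++\sqrt{\kappa+\alpha}\,D^2_+$, use the $O(|\sigma|^{-1})$ operator estimate of Theorem \ref{th:conv_D} to invert $D^{PML}_\sigma(\alpha)$ by a Neumann/perturbation argument, and conclude analyticity of $w^{PML}_\sigma=\left[D^{PML}_\sigma\right]^{-1}F$ from analyticity of $D^{PML}_\sigma$ and of $F$. Where you genuinely add something is the first half: the paper simply asserts that ``$D^{PML}_\sigma(\alpha)$ depends analytically on $\alpha\in\C$'' and appeals to analytic Fredholm plus perturbation theory, whereas you actually prove the analyticity near the half disks by noting that $\sqrt{\zeta}\coth(-\i\sqrt{\zeta}\,\sigma)$ is even in $\sqrt{\zeta}$, hence single valued with poles only on the ray $\arg\zeta=-2\tau$, and then checking that $k^2-(\alpha+j)^2$ avoids that ray for $\alpha$ on the \emph{correct} half disk. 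This is worth doing, because the paper's blanket claim is not literally true: in the $\alpha$-plane the $\coth$ poles sit approximately at $-\kappa+\frac{n^2\pi^2}{2k|\sigma|^2}e^{-2\i\tau}$ (and, mirrored, just above $+\kappa$), so they accumulate toward the cutoff values from the side opposite to the chosen half disk as $|\sigma|$ grows --- exactly your observation that the admissible neighbourhood must shrink with $\sigma$, and incidentally an explanation of why the contour is deformed upward at $-\kappa$ and downward at $+\kappa$. One small imprecision in your last paragraph: near the endpoint $-\kappa$ any honest neighbourhood of $\overline{\B_+(-\kappa,\delta)}$ contains points where $\arg(\kappa+\alpha)$ is arbitrary in $(-\pi,0)$, so thinness alone cannot keep the argument above $-2\tau$ there; the repair is by modulus rather than argument --- at such points $\left|k^2-(\alpha-\j)^2\right|=O(\epsilon')$, which stays below the modulus $\pi^2/|\sigma|^2$ of the nearest pole once the thickness $\epsilon'$ of the neighbourhood is taken of order $|\sigma|^{-2}$. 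With that two--case patch (argument control away from $-\kappa$, modulus control near it), your argument is complete and somewhat more self-contained than the paper's.
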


\begin{proof}
We only need to consider the half disk $\overline{B_+(-\kappa,\delta)}$. 
From the proof of Theorem \ref{th:ana_ext}, $D^{-1}(\alpha)$ exists in $\B(-\kappa,\delta)\setminus Z_-$. 
From \eqref{eq:conv_w_1}, for sufficiently large $|\sigma|$ and all $\alpha\in \overline{\B_+(-\kappa,\delta)}$, $D^{PML}_\sigma(\alpha)$ is a small perturbation of $D(\alpha)$ thus is also invertible.

 From the definition, $D^{PML}_\sigma(\alpha)$ depends analytically on $\alpha\in\C$. From analytic Fredholm theory and the perturbation theory, $\left[D^{PML}_\sigma(\alpha)\right]^{-1}$ exists and depends analytically on $\alpha$ in a small neighourhood of $\overline{\B_+(-\kappa,\delta)}$.
  
  The proof for $\overline{\B_-(\kappa,\delta)}$ is similar thus is omitted.
\end{proof}

From Lemma \ref{lm:ext_ana_pml} and Cauchy integral formula, we get a similar formula as \eqref{eq:new_inverse}:
\begin{equation}
 \label{eq:pml_inverse}
 u^{PML}_\sigma(x)=\int_{-1/2}^{1/2} e^{\i\alpha x_1} w^{PML}_\sigma(\alpha,x)\d\alpha=\int_\mathfrak{C} e^{\i\alpha x_1} w^{PML}_\sigma(\alpha,x)\d\alpha,\quad x\in\Omega_H.
\end{equation}
With the above convergence analysis, we immediately obtain the convergence of 
$u^{PML}_\sigma$ to the exact solution $u$ defined by \eqref{eq:new_inverse} (equivalent to \eqref{eq:inverse}). This result is concluded in the next theorem.

\begin{theorem}
 \label{th:conv_u}
 
Suppose the wavenumber $k$ satisfies Assumption \ref{asp1} and $\kappa\in(-1/2,1/2)\setminus\{0\}$ is the rounding error. Let $\mathfrak{C}$ be the contour defined in Theorem \ref{th:modif_cont}. Let $w^{PML}_\sigma(\alpha,\cdot)$ be the solution of \eqref{eq:var_per_pml} for $\alpha\in \mathfrak{C}$ and $u^{PML}_\sigma$ is defined by \eqref{eq:pml_inverse}. Then $u^{PML}_\sigma\in \widetilde{H}^1_{loc}(\Omega_H)$ and satisfies
\[
 \left\|u^{PML}_\sigma-u\right\|_{\widetilde{H}^1(D)}\leq C \exp\left({2\pi \delta\max_{x\in D}{|x_1|}}\right)e^{-\gamma\sqrt{k\delta}|\sigma|}
\]
for any bounded subset $D$  in $\Omega_H$. 
\end{theorem}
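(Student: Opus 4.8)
The plan is to derive the estimate for $u^{PML}_\sigma - u$ by applying the modified inverse Floquet--Bloch representations \eqref{eq:new_inverse} and \eqref{eq:pml_inverse}, both of which integrate over the \emph{same} contour $\mathfrak{C}$, and then splitting $\mathfrak{C}$ into the part on the real axis and the two half circles $\mathfrak{C}_\pm$. On the real part of $\mathfrak{C}$ we have the exponential bound \eqref{eq:conv_w} for $w^{PML}_\sigma(\alpha,\cdot)-w(\alpha,\cdot)$ in $\widetilde H^1_\p(\Omega_H^0)$, uniformly in $\alpha$; on $\mathfrak{C}_\pm$ we only have the weaker $|\sigma|^{-1}$ bound \eqref{eq:conv_w_1}, but the half circles have length $\pi\delta$, so that piece contributes $O(\delta|\sigma|^{-1})$, which is dominated by $Ce^{-\gamma\sqrt{k\delta}|\sigma|}$ once $|\sigma|$ is large (absorbing, if one wishes, into a slightly smaller $\gamma$). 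Thus
\[
 u^{PML}_\sigma(x)-u(x)=\int_{\mathfrak{C}} e^{\i\alpha x_1}\bigl(w^{PML}_\sigma(\alpha,x)-w(\alpha,x)\bigr)\d\alpha ,
\]
and I would estimate the $\widetilde H^1(D)$ norm by Minkowski's integral inequality, pulling the norm inside the $\alpha$-integral.

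The one genuinely new ingredient is controlling the factor $e^{\i\alpha x_1}$ when $\alpha$ is complex, i.e.\ on $\mathfrak{C}_\pm$. There $\Im(\alpha)$ ranges over $[0,\delta]$ (for $\mathfrak{C}_+$) or $[-\delta,0]$ (for $\mathfrak{C}_-$), so $|e^{\i\alpha x_1}|=e^{-\Im(\alpha)x_1}\le e^{\delta|x_1|}$, and on the real part of $\mathfrak{C}$ it is simply $1$. This is precisely where the prefactor $\exp(2\pi\delta\max_{x\in D}|x_1|)$ in the statement comes from — the $2\pi$ is just a crude constant bounding the total variation of the phase contribution over the (finite-length) contour, and one should also keep in mind that the periodicity-cell solution $w(\alpha,\cdot)$ and $w^{PML}_\sigma(\alpha,\cdot)$ are evaluated at the point of $\Omega_H^0$ congruent to $x$, so restricting $x$ to a bounded $D$ bounds $|x_1|$. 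Putting these together:
\[
 \left\|u^{PML}_\sigma-u\right\|_{\widetilde H^1(D)}\le \int_{\mathfrak{C}} e^{\delta|x_1|}\left\|w^{PML}_\sigma(\alpha,\cdot)-w(\alpha,\cdot)\right\|_{\widetilde H^1_\p(\Omega_H^0)}\,|\d\alpha| ,
\]
and inserting the two bounds from Theorem \ref{th:conv_D} over the two portions of $\mathfrak{C}$ yields $C\exp(2\pi\delta\max_{x\in D}|x_1|)e^{-\gamma\sqrt{k\delta}|\sigma|}$ after the $O(\delta|\sigma|^{-1})$ term is absorbed.

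A secondary point to address is that $u^{PML}_\sigma\in\widetilde H^1_{\loc}(\Omega_H)$: this follows because the integrand in \eqref{eq:pml_inverse} is, for large $|\sigma|$, an analytic (hence continuous and bounded) $\widetilde H^1_\p(\Omega_H^0)$-valued function of $\alpha$ on the compact contour $\mathfrak{C}$ by Lemma \ref{lm:ext_ana_pml} and the uniform invertibility of $D^{PML}_\sigma(\alpha)$ from Theorem \ref{th:conv_D}, so the contour integral defines an element of $\widetilde H^1_\p(\Omega_H^0)$ whose $\J^{-1}$-type reassembly is locally $\widetilde H^1$; one can also simply note it is a perturbation of $u\in\widetilde H^1_{\loc}(\Omega_H)$ by the just-derived bounded error. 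The main obstacle, such as it is, is bookkeeping: making sure the contour used for $u$ and for $u^{PML}_\sigma$ is literally the same $\mathfrak{C}$ (it is, by Theorem \ref{th:modif_cont} and \eqref{eq:pml_inverse}), that the analytic extensions of both $w$ and $w^{PML}_\sigma$ are valid on the half disks bounded by $\mathfrak{C}_\pm$ so that the contour deformation is legitimate (Theorem \ref{th:ana_ext} and Lemma \ref{lm:ext_ana_pml}), and that the $|\sigma|^{-1}$ contribution on $\mathfrak{C}_\pm$ is harmlessly dominated — no hard analysis, but the argument collapses if any of these alignments fails.
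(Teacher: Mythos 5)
Your overall architecture is the same as the paper's (same contour $\mathfrak{C}$ for both representations, bound $|e^{\i\alpha x_1}|\le e^{\delta|x_1|}$ since $\Im(\alpha)\in[-\delta,\delta]$, pull the norm through the $\alpha$-integral, invoke Theorem \ref{th:conv_D}), but there is one genuinely wrong step. You claim that on $\mathfrak{C}_\pm$ only the weak bound \eqref{eq:conv_w_1} is available, and that the resulting contribution $O(\delta|\sigma|^{-1})$ ``is dominated by $Ce^{-\gamma\sqrt{k\delta}|\sigma|}$ once $|\sigma|$ is large.'' That domination is backwards: for large $|\sigma|$ one has $e^{-\gamma\sqrt{k\delta}|\sigma|}\ll|\sigma|^{-1}$, so an exponentially small quantity can never absorb an algebraically decaying one, no matter how you shrink $\gamma$. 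If the half circles really only admitted the $|\sigma|^{-1}$ estimate, your splitting would only yield $\|u^{PML}_\sigma-u\|_{\widetilde H^1(D)}=O(|\sigma|^{-1})$, i.e.\ linear convergence, and the theorem would not follow.

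The gap disappears once you read Theorem \ref{th:conv_D} (and Lemma \ref{lm:esti_h}) as the paper intends: the exponential estimate \eqref{eq:conv_w} is stated for \emph{all} $\alpha\in\mathfrak{C}$, and $\mathfrak{C}$ by its definition in Theorem \ref{th:modif_cont} already contains the half circles $\mathfrak{C}_\pm$. This is exactly what Case~3 of Lemma \ref{lm:esti_h} provides: on the semicircles, which stay at distance $\delta$ from the cutoff values, $|h(z)|\ge\exp(\gamma\sqrt{\delta}|\sigma|\sqrt{|\Re(z)|+k})$ still holds, so the convergence of $D^{PML}_\sigma(\alpha)$ to $D(\alpha)$ is exponential there too. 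The weaker bound \eqref{eq:conv_w_1} on the closed half disks (Case~4) is not meant for the error integral at all; its role is to guarantee that $D^{PML}_\sigma(\alpha)$ remains invertible and analytic inside the half disks (Lemma \ref{lm:ext_ana_pml}), which legitimizes the contour deformation behind \eqref{eq:pml_inverse}. With that correction your argument coincides with the paper's proof: apply \eqref{eq:conv_w} uniformly on all of $\mathfrak{C}$, multiply by the phase factor bounded by $\exp(2\pi\delta\max_{x\in D}|x_1|)$, and integrate over the finite-length contour. Your remarks on $u^{PML}_\sigma\in\widetilde H^1_{\loc}(\Omega_H)$ and on the bookkeeping of the contour deformation are fine.
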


\begin{proof}
Recall that from \eqref{eq:new_inverse}, for any $x\in D$,
\[
 u(x)=\int_\mathfrak{C} e^{\i\alpha x_1}w(\alpha,x)\d\alpha.
\]
From the choice of $\mathfrak{C}$, $\Im(\alpha)\in[-\delta,\delta]$. Thus with \eqref{eq:conv_w}, we have the following estimation:
\[
 \left\|e^{\i\alpha (\cdot)_1}\left(w^{PML}_\sigma-w\right)(\alpha,\cdot)\right\|_{H^1(D)}\leq C \exp\left({2\pi \delta\max_{x\in D}{|x_1|}}\right)e^{-\gamma\sqrt{k\delta}|\sigma|}.
\]
Then the estimation for $u^{PML}_\sigma$ is obtained directly.

\end{proof}

From above arguments, it is clear that the convergence of the solution approximated by \eqref{eq:pml_inverse}  in a bounded domain is exponential. The convergence rate is given by the parameter $|\sigma|\approx (m+1)^{-1}\lambda\rho$ where $\lambda>0$ is the thickness of the PML layer and $\rho>>1$ is the coefficient to define the polynomial $\widehat{s}$. For a numerical implementation, we need to solve $\alpha$-quasi-periodic problems \eqref{eq:pml_or} for all $\alpha\in \mathfrak{C}$ and then approximate the contour integral \eqref{eq:pml_inverse}.

%
%
%

\section{Numerical results}

In this section, we present numerical examples to show the convergence of the PML method. In these numerical examples, the  periodic surface is defined by the function:
\[
 \zeta(x_1)= 1.5+\frac{\sin(x_1)}{3}-\frac{\cos(2x_1)}{4}.
\]
The source term is also fixed:
\begin{equation*}
f(x)=\begin{cases}
      0,\quad |x-a_0|>0.3;\\
      3,\quad 0.1<|x-a_0|<0.3;\\
      3\zeta(|x-a_0|),\quad\text{otherwise.}
     \end{cases}
\end{equation*}
Here $\zeta(t)$ is a $C^8$-continuous cutoff function which equals to $1$ when $t\leq 0.1$ and $0$ when $t\geq 0.3$, and $a_0=(0,1.8)$. Note that $f$ is compactly supported in the disk with center $a_0$ and radius $0.3$. The height $H$ is taken as $2.5$ and the the thickness of the PML layer $\lambda$ is fixed as $1.5$. The fixed complex $\chi=\exp(\i\pi/4)$. For structures and the source term we refer to Figure \ref{fig:sample:eg}.

\begin{figure}[tttttt!!!b]
\centering
\begin{tabular}{cc}
\includegraphics[width=0.45\textwidth]{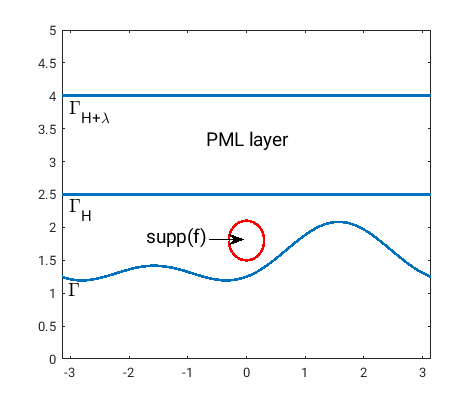} & 
\includegraphics[width=0.45\textwidth]{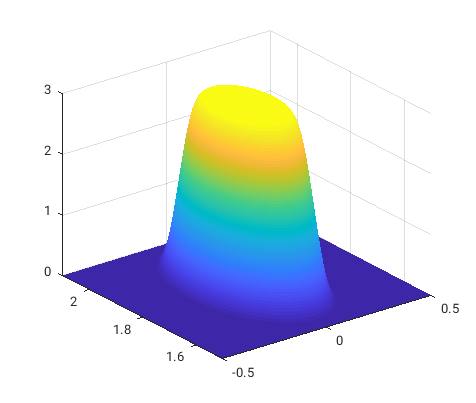} \\
(a) & (b) 
\end{tabular}
\caption{(a)  Structure; (b) source term.}
\label{fig:sample:eg}
\end{figure}

We produce the ``exact solution'' (denoted by $u_{ext}$) by the numerical approximation of the exact formulation \ref{eq:per1}-\eqref{eq:per3} with the discretization method of \eqref{eq:inverse} (with 80 nodal points) introduced in \cite{Zhang2017e}. The parameter $H$ here is chosen as $4$, which is different from the PML method which is $2.5$. The maximum meshsize is $0.005$ and the DtN map is approximated by a finite series 
\[
 T^+_{\alpha,N}\phi:=\i\sum_{j=-40}^40 \sqrt{k^2-(\alpha+j)^2}\widehat{\phi}_j e^{\i j x_1}\text{ where }\phi(x_1)=\sum_{j\in\Z}\widehat{\phi}_j e^{\i  j x_1}.
\]
Then we compare the numerical result obtained by the PML method with different parameter $\rho$'s with the ``exact solution'', on a straight line $S:=(-\pi,\pi)\times\{2.4\}$, which lies below the PML layer. Note that since the thickness $\lambda$ is fixed, the parameter $\sigma$ only depends on $\rho$. Thus we replace the subscription $\sigma$ by $\rho$ in this section. The relative error is defined as
\[
 err(\rho)=\frac{\left\|u^{PML}_\rho-u_{ext}\right\|_{L^2(S)}}{\left\|u_{ext}\right\|_{L^2(S)}}.
\]
Note that the meshes are exactly the same as for the ``exact solutions'' and the discretization method of \eqref{eq:inverse_pml} is introduced in \cite{Zhang2019b} with also $80$ points.

We carry out the numerical methods for four different wavenumbers. Two wavenumbers satisfy Assumption \ref{asp1}, which are $1.2$ and $\sqrt{5}$; and two do not satisfy this assumption, which are $1$ and $1.5$. Numerical results with different $\rho$'s are listed in Table \ref{eg}. We also plot the logarithm  relative error against the parameter $\rho$ in Figure \ref{fig:eg} (a). From both Table \ref{eg} and Figure \ref{fig:eg} (a), the error decays exponentially at first, and the decay no longer days when it reaches  $~10^{-5}$. We give two possible reasons for this phenomenon. The first is the error from the finite element method with fixed meshes in both the ``exact solutions'' and the PML solutions. Note that since the convergence rate for the discretization methods introduced in \cite{Zhang2017e,Zhang2019b} is always very fast, we ignore the relative errors from these processes. The second reason is the increasing of errors due larger parameter $\rho$'s. 

\begin{table}[htb]
\centering
\caption{Relative $L^2$-errors different $k$'s and $\rho$'s.}\label{eg}
\begin{tabular}
{|p{1.8cm}<{\centering}||p{2cm}<{\centering}|p{2cm}<{\centering}
 |p{2cm}<{\centering}|p{2cm}<{\centering}|}
\hline
  & $k=1.2$ & $k=\sqrt{5}$ & $k=1$ & $k=1.5$\\
\hline
\hline
$\rho=2$&$2.18$E$-01$&$4.97$E$-02$&$3.12$E$-01$&$1.47$E$-01$\\
\hline
$\rho=4$&$3.52$E$-02$&$2.04$E$-03$&$5.61$E$-02$&$1.77$E$-02$\\
\hline
$\rho=6$&$6.10$E$-03$&$8.94$e$-05$&$1.22$E$-02$&$1.56$E$-03$\\
\hline
$\rho=8$&$1.03$E$-03$&$2.75$e$-05$&$2.77$E$-03$&$2.06$E$-04$\\
\hline
$\rho=10$&$1.71$E$-04$&$3.12$e$-05$&$6.43$E$-04$&$4.98$E$-05$\\
\hline
$\rho=12$&$3.15$E$-05$&$3.31$e$-05$&$1.48$E$-04$&$3.01$E$-05$\\
\hline
$\rho=14$&$2.21$E$-05$&$3.50$e$-05$&$3.27$E$-05$&$2.78$E$-05$\\
\hline
$\rho=16$&$2.20$E$-05$&$3.72$e$-05$&$1.92$E$-05$&$2.86$E$-05$\\
\hline
$\rho=18$&$2.53$E$-05$&$3.96$e$-05$&$1.74$E$-05$&$2.96$E$-05$\\
\hline
$\rho=20$&$2.86$E$-05$&$4.21$e$-05$&$1.85$E$-05$&$3.13$E$-05$\\
\hline
\end{tabular}
\end{table}

\begin{figure}[tttttt!!!b]
\centering
\begin{tabular}{cc}
\includegraphics[width=0.45\textwidth]{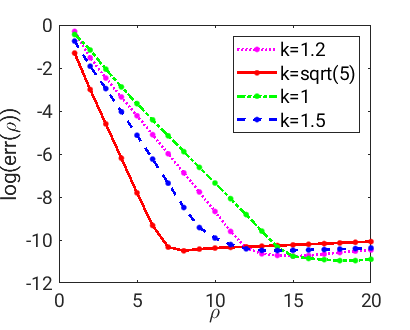} & 
\includegraphics[width=0.45\textwidth]{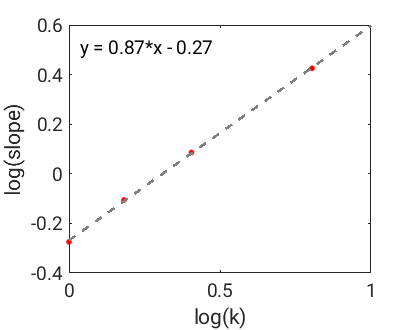} \\
(a) & (b) 
\end{tabular}
\caption{(a) dependence of errors on the parameter $\rho$; (b) slopes with difference $k$'s.}
\label{fig:err}
\end{figure}

It is interesting to see that even for wavenumbers which do not satisfy Assumption \ref{asp1}, the PML method also converges exponentially with respect to the parameter $\rho$. This may imply that the error estimate is expected to be extended to these cases. We also observe an increasing of the slopes with larger $k$'s. We carry out line fittings for each curve in the exponentially decaying parts and the results are shown in Table \ref{slope} and Figure \ref{fig:err} (b). In Theorem \ref{th:conv_u} it is expected that the dependence of the slope is $\sqrt{k}$, the result shown in Figure \ref{fig:err} is even faster. This may come from the difference choices of another parameter $\delta$, which is not very clear according to the discussions in this paper. 

\begin{table}[htb]
\centering
\caption{Slopes with different $k$'s.}\label{slope}
\begin{tabular}
{|p{1.8cm}<{\centering}||p{1.2cm}<{\centering}|p{1.2cm}<{\centering}
 |p{1.2cm}<{\centering}|p{1.2cm}<{\centering}|}
\hline
wavenumber  & $k=1$& $k=1.2$   & $k=1.5$&  $k=\sqrt{5}$\\
\hline
\hline
slopes & $0.76$ & $0.90$ & $1.09$   & $1.53$\\
\hline
\end{tabular}
\end{table}

\section{Further comments}

 The method introduced in this paper can be extended without major difficulty to the case with local perturbations in the periodic surface. However, we do not discuss this case in this paper since we would like to have simplified representations.  For details we refer to \cite{Zhang2017e}. This method can  also be extended to locally perturbed periodic layers, but this may involve the guided modes which propagate along the periodic structures. We refer to \cite{Fliss2021} for some details for this case.

 From numerical examples, for wavenumbers that do not satisfy Assumption \ref{asp1} the convergence is also exponential. The decay rates for all the wavenumbers are much faster than expected (since from Theorem \ref{th:conv_u} the convergence depends on $\delta>0$ which is expected to be very small), which implies that the estimation in this paper maybe not optimal. Due to above reasons, the author has a conjecture that the convergence rate does not depend on $\delta$ thus it is easily extended to the case with Assumption \ref{asp1}.  Since we are not able to prove that at present, it remains to be an open question.
 
 \section*{Acknowledgment}
 This work is funded by the Deutsche Forschungsgemeinschaft (DFG, German Research Foundation) – Project-ID 258734477 – SFB 1173 and Project-ID 433126998. The idea to apply the perfectly matched layers to the periodic open waveguide problems was proposed by Prof. Sonia Fliss during a discussion. The author is grateful for her valuable suggestions that motivated this paper. The author is also grateful for Prof. Andreas Kirsch for his valuable suggestions in improving this paper.

\bibliographystyle{plain}
\bibliography{ip-biblio}

\providecommand{\noopsort}[1]{}
\begin{thebibliography}{10}

\bibitem{Arens2021}
T.~Arens and R.~Zhang.
\newblock A nonuniform mesh method for scattering problems in periodic
  structures.
\newblock {\em Preprint}, 2021.

\bibitem{Beren1994}
J.-P. Berenger.
\newblock A perfectly matched layer for the absorption of electromagnetic
  waves.
\newblock {\em Journal of Computational Physics}, 114(2):185–200, 1994.

\bibitem{Bruck2003}
G.~Bruckner and J.~Elschner.
\newblock A two-step algorithm for the reconstruction of perfectly reflecting
  periodic profiles.
\newblock 19(2):315--329, 2003.

\bibitem{Chand2010}
S.~N. {Chandler-Wilde} and J.~Elschner.
\newblock Variational approach in weighted {S}obolev spaces to scattering by
  unbounded rough surfaces.
\newblock {\em SIAM. J. Math. Anal.}, 42:2554--2580, 2010.

\bibitem{Chand2005}
S.~N. {Chandler-Wilde} and P.~Monk.
\newblock Existence, uniqueness, and variational methods for scattering by
  unbounded rough surfaces.
\newblock {\em SIAM. J. Math. Anal.}, 37:598--618, 2005.

\bibitem{Chand2009}
S.~N. Chandler-Wilde and P.~Monk.
\newblock The pml for rough surface scattering.
\newblock {\em Applied Numerical Mathematics}, 59:2131--2154, 2009.

\bibitem{Chen2003}
Z.~Chen and H.~Wu.
\newblock An adaptive finite element method with perfectly matched absorbing
  layers for the wave scattering by periodic structures.
\newblock {\em SIAM Journal on Numerical Analysis}, 41(3):799–826, 2003.

\bibitem{Fliss2021}
S.~Fliss, A.~Kirsch, and R.~Zhang.
\newblock The {PML}-method for a scattering problem for an open locally
  perturbed periodic waveguide.
\newblock {\em In preparation}, 2021.

\bibitem{Kirsc1993}
A.~Kirsch.
\newblock Diffraction by periodic structures.
\newblock In L.~P{\"a}varinta and E.~Somersalo, editors, {\em Proc. Lapland
  Conf. on Inverse Problems}, pages 87--102. Springer, 1993.

\bibitem{Kirsc2021}
A.~Kirsch and R.~Zhang.
\newblock Computation of the exceptional values for an open waveguide.
\newblock {\em Preprint}, 2021.

\bibitem{Lechl2016}
A.~Lechleiter.
\newblock The {F}loquet-{B}loch transform and scattering from locally perturbed
  periodic surfaces.
\newblock {\em J. Math. Anal. Appl.}, 446(1):605--627, 2017.

\bibitem{Lechl2015e}
A.~Lechleiter and D.-L. Nguyen.
\newblock {Scattering of {H}erglotz waves from periodic structures and mapping
  properties of the {B}loch transform}.
\newblock {\em {Proc. Roy. Soc. Edinburgh Sect. A}}, 231:1283--1311, 2015.

\bibitem{Li2011}
P.~Li, H.~Wu, and W.~Zheng.
\newblock Electromagnetic scattering by unbounded rough surfaces.
\newblock {\em SIAM J. Math. Anal.}, 43(3):1205--1231, 2011.

\bibitem{Zhang2017e}
R.~Zhang.
\newblock A high order numerical method for scattering from locally perturbed
  periodic surfaces.
\newblock {\em SIAM J. Sci. Comput.}, 40(4):A2286--A2314, 2018.

\bibitem{Zhang2019b}
R.~Zhang.
\newblock Numerical method for scattering problems in periodic waveguides.
\newblock {\em https://arxiv.org/pdf/1906.12283.pdf}, 2019.

\end{thebibliography}
\end{document}